\documentclass[11pt]{amsart}

\usepackage[margin=1in]{geometry}
\usepackage{amsmath,amssymb,amsthm,mathtools}
\usepackage[colorlinks=true,linkcolor=blue,citecolor=blue,urlcolor=blue,
  pdftitle={Zero-Sum Cycles in Regular Digraphs},
  pdfauthor={Varun Sivashankar}]{hyperref}

\AtEndEnvironment{thebibliography}{\enlargethispage{4\baselineskip}}

\makeatletter
\def\section{\@startsection{section}{1}%
  \z@{1.5\linespacing\@plus.6\linespacing}{.65\linespacing}%
  {\normalfont\Large\bfseries\raggedright}}
\def\subsection{\@startsection{subsection}{2}%
  \z@{1.05\linespacing\@plus.5\linespacing}{.4\linespacing}%
  {\normalfont\large\bfseries\raggedright}}
\makeatother

\newtheorem{theorem}{Theorem}[section]
\newtheorem{proposition}[theorem]{Proposition}
\newtheorem{lemma}[theorem]{Lemma}
\newtheorem{corollary}[theorem]{Corollary}
\newtheorem{conjecture}[theorem]{Conjecture}
\newtheorem{question}[theorem]{Question}
\theoremstyle{remark}
\newtheorem*{remark}{Remark}

\newcommand{\per}{\operatorname{per}}
\newcommand{\tr}{\operatorname{tr}}
\newcommand{\diag}{\operatorname{diag}}

\title{Zero-Sum Cycles in Regular Digraphs}
\author{Varun Sivashankar}
\address{Department of Mathematics, Princeton University,
Princeton, New Jersey, USA}
\email{varunsiva@princeton.edu}
\date{}

\begin{document}

\begin{abstract}
Let $\Gamma$ be a finite group of order $k\ge2$, and label the edges of a simple loopless $d$-regular digraph $D$ by elements of $\Gamma$. A directed cycle is zero-sum if the ordered product of its labels is the identity of $\Gamma$. We prove that a zero-sum cycle exists whenever $d\ge e^3(k-1)$. We also prove that every labelled $d$-regular digraph contains $\Omega(d/k)$ pairwise vertex-disjoint zero-sum cycles. When $d\ge50k$, it contains $\Omega(d^2/k)$ pairwise edge-disjoint zero-sum cycles. All three results are asymptotically optimal. The existence and packing results extend to Eulerian digraphs whose minimum and maximum common degrees $\delta$ and $\Delta$ satisfy $\delta^3/\Delta^2=\Omega(k)$. The techniques extend a determinant--permanent argument of Friedland for even directed cycles.
\end{abstract}

\maketitle

\section{Introduction}

Throughout, digraphs are finite, simple, and loopless.  Let $D$ be a
nonempty digraph.  Thus $D$ has at most one edge from $u$ to $v$ for
each ordered pair $(u,v)$, there are no edges from a vertex to itself,
and both opposite edges $uv$ and
$vu$ may be present.  For a vertex $v$, let $d^+(v)$ and $d^-(v)$ denote
its outdegree and indegree, respectively.  We call $D$ \emph{$d$-regular}
if $d^+(v)=d^-(v)=d$ for every vertex $v$, and \emph{Eulerian} if
$d^+(v)=d^-(v)$ for every vertex $v$.  For an Eulerian digraph, write
$d(v):=d^+(v)=d^-(v)$, $\delta:=\min_v d(v)$, and
$\Delta:=\max_v d(v)$.  A directed cycle is a sequence
\[
  C=v_1v_2\cdots v_\ell v_1
\]
whose vertices $v_1,\ldots,v_\ell$ are distinct and whose displayed
edges belong to $D$.  In particular, cycles are simple rather than
arbitrary closed directed walks.  We regard two cycles that differ only
by a cyclic rotation as the same.

Let $\Gamma$ be a finite group, written multiplicatively, with identity
$1_\Gamma$, and let $w:E(D)\to\Gamma$ be an edge-labelling.  The ordered
label product of $C$ is
\[
  w(C):=w(v_1v_2)w(v_2v_3)\cdots w(v_\ell v_1).
\]
Changing the initial vertex conjugates this product, so the condition
$w(C)=1_\Gamma$ does not depend on the choice of initial vertex.  We call
such a cycle a \emph{zero-sum cycle}.  The most familiar case is
$\Gamma=\mathbb Z_k$ with every edge labelled $1$.  Then the zero-sum
cycles are exactly those whose lengths are divisible by $k$.

A natural question, going back to Friedland~\cite{Friedland} and
Alon--Linial~\cite{AlonLinial}, is the following: how large must $d$ be,
in terms of $k=|\Gamma|$, to force a zero-sum cycle under every
$\Gamma$-labelling?

\subsection{Main results}

Our main results are as follows.
\begin{itemize}
\item Theorem~\ref{thm:main} states that every $d$-regular digraph with
a $\Gamma$-labelling contains a zero-sum directed cycle whenever
$d\ge e^3(k-1)$.

\item Theorems~\ref{thm:vertex-packing} and~\ref{thm:packing} give the
regular packing bounds.  For every $d\ge1$, every $d$-regular digraph
with a $\Gamma$-labelling contains at least
$\lfloor d/(25k)\rfloor$ pairwise vertex-disjoint zero-sum directed
cycles.  If $d\ge50k$, it also contains at least $d^2/(100k)$ pairwise
edge-disjoint zero-sum directed cycles.

\item We generalize the existence result to Eulerian digraphs with
unequal degrees.  Theorem~\ref{thm:eulerian} gives a zero-sum directed
cycle whenever $\Delta>0$ and $\delta^3/\Delta^2\ge e^3(k-1)$.

\item Theorems~\ref{thm:eulerian-vertex-packing}
and~\ref{thm:eulerian-edge-packing} give absolute constants $c,C>0$
such that every Eulerian digraph with a $\Gamma$-labelling and $\Delta>0$
contains at least $\lfloor c\delta^3/(k\Delta^2)\rfloor$ pairwise
vertex-disjoint zero-sum directed cycles.  Whenever
$\delta^3/\Delta^2\ge Ck$, it also contains at least
$c\delta^4/(k\Delta^2)$ pairwise edge-disjoint zero-sum directed
cycles.

\item All the results above apply to $\Gamma=\mathbb Z_k$ with every
edge labelled $1$, giving cycles whose lengths are divisible by $k$.
Corollary~\ref{cor:all-moduli} also shows that every $d$-regular digraph
with $d\ge e^3(k-1)$ contains pairwise edge-disjoint cycles
$C_2,\ldots,C_k$ such that $r$ divides $|C_r|$ for every $2\le r\le k$.
\end{itemize}

When $\delta=\Delta=d$, Theorem~\ref{thm:eulerian} reduces to
Theorem~\ref{thm:main}, and the Eulerian packing bounds have orders
$d/k$ and $d^2/k$.  We give the regular proofs separately because they
are much shorter.  The weighted arguments
for unequal degrees are given in Section~5.

The linear dependence on $k$ in Theorems~\ref{thm:main}
and~\ref{thm:eulerian} is necessary: Proposition~\ref{prop:circulant-obstruction}
gives a strongly connected $k$-regular digraph with a
$\mathbb Z_k$-labelling and no zero-sum cycle.  The dependence on $d$
and $k$ in Theorems~\ref{thm:vertex-packing} and~\ref{thm:packing} is
also optimal up to constants.
Proposition~\ref{prop:obstruction} gives at most $d/k$ pairwise
vertex-disjoint zero-sum cycles and at most $d^2/k$ pairwise
edge-disjoint zero-sum cycles.

\subsection{Earlier results and proof strategy}

Our proof extends the following argument of Friedland for even directed
cycles.  Since a cycle of length $\ell$ has sign $(-1)^{\ell-1}$, the
disjoint-cycle decomposition of $\sigma$ gives
\[
  \operatorname{sgn}(\sigma)
  =\prod_{C\text{ a cycle of }\sigma}(-1)^{|C|-1}.
\]
Let $A$ be the adjacency matrix of a $d$-regular digraph on
$n$ vertices and put $M=I+A$.  Writing $S_n$ for the set of
permutations of $\{1,\ldots,n\}$, by definition,
\[
  \per M=\sum_{\sigma\in S_n}\prod_{v=1}^n M_{v,\sigma(v)},
  \qquad
  \det M=\sum_{\sigma\in S_n}\operatorname{sgn}(\sigma)
             \prod_{v=1}^n M_{v,\sigma(v)}.
\]
A product in these sums is nonzero precisely when the cycles of
$\sigma$ form a directed cycle cover after we add a loop at every
vertex of $D$.  The loops are the fixed points of
$\sigma$, and every nontrivial cycle of $\sigma$ is a directed cycle in
$D$.  Hence, if $D$ has no even directed cycle, every cycle in the
decomposition of $\sigma$ has positive sign, and therefore
$\det M=\per M$.  Since every row and column sum of $M$ is $d+1$, the
matrix $M/(d+1)$ is doubly stochastic.
The van der Waerden theorem and Hadamard's inequality therefore give
\[
  \per M\ge(d+1)^n\frac{n!}{n^n}
  >\left(\frac{d+1}{e}\right)^n,
  \qquad
  |\det M|\le(d+1)^{n/2}.
\]
These bounds are incompatible for every $d\ge7$, because
$\sqrt{d+1}>e$.  Thus every $d$-regular digraph with $d\ge7$ contains
an even directed cycle~\cite{Friedland}.

Vazirani and Yannakakis~\cite{VaziraniYannakakis} proved the underlying
characterization: for a digraph with a loop at every vertex, the
adjacency matrix has equal determinant and permanent if and only if the
digraph has no even directed cycle.

Alon and Bregman used the same determinant-permanent framework to prove
that every $8$-uniform $8$-regular hypergraph is
$2$-colorable~\cite{AlonBregman}.  Thomassen later sharpened this to
every $r$-uniform
$r$-regular hypergraph with $r\ge4$~\cite{Thomassen1992}.

Alon and Linial pointed out why Friedland's argument does not immediately
extend to cycles of length $0$ modulo $k$ for
$k>2$~\cite{AlonLinial}.  Signs distinguish even cycles from odd cycles.
For
a group-labelled cycle, however, one must distinguish the identity
product from every nonidentity product, and ordinary determinant signs
provide no such distinction.  Their proof instead uses a random
vertex colouring and the Lov\'asz local lemma.  The same proof gives the
following result.

\begin{proposition}[Alon--Linial, adapted]\label{prop:alon-linial}
Let $\Gamma$ be a finite group of order $k\ge2$, and let $D$ be a
$\Gamma$-labelled digraph.  Put
\[
  \delta^+:=\min_v d^+(v),
  \qquad
  \Delta^-:=\max_v d^-(v).
\]
If
\[
  e(\Delta^-\delta^++1)
  \left(1-\frac1k\right)^{\delta^+}<1,
\]
then $D$ contains a zero-sum directed cycle.
\end{proposition}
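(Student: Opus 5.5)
We follow Alon--Linial's scheme: give every vertex a uniformly random element of $\Gamma$, and let each vertex try to choose an out-edge that is ``consistent'' with these colours. A consistent choice at every vertex yields a zero-sum cycle, and the local lemma shows that a consistent choice is available everywhere with positive probability. We may delete out-edges until every vertex has outdegree exactly $\delta^+$. This does not increase any indegree, so $\Delta^-$ is still an upper bound, and any zero-sum cycle of the subgraph is one of $D$. The same reduction also removes the degenerate case: if $\delta^+=0$, the hypothesis reads $e<1$, which is false, so we may assume $\delta^+\ge1$.

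Choose independent, uniformly random colours $x_v\in\Gamma$. Call an edge $uv$ \emph{good} if $x_u\,w(uv)=x_v$. Let $A_u$ be the event that no out-edge of $u$ is good. Conditioned on $x_u$, the events ``$uv$ is good'' for the distinct out-neighbours $v$ each require $x_v$ to take one prescribed value. These events involve distinct independent variables, so
\[
\Pr(A_u)=\left(1-\tfrac1k\right)^{\delta^+}.
\]
The event $A_u$ is determined by $x_u$ and by $\{x_v: v\in N^+(u)\}$. Hence $A_u$ is mutually independent of all events $A_{u'}$ whose variable sets are disjoint from these. The variable sets of $A_u$ and $A_{u'}$ meet only if $u'=u$, or $u'$ is an in-neighbour of $u$ or of some $v\in N^+(u)$. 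This excludes at most $\Delta^-+\delta^+\Delta^-$ other events. Counting carefully, one can take dependency degree at most $\Delta^-\delta^++\Delta^-$. I expect this count to be the main obstacle: the stated hypothesis has the factor $\Delta^-\delta^++1$, so the bound must be tightened.

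To tighten it, I would first try the following. Every in-neighbour $u'$ of $u$ has an out-neighbour in $N^+(u)\cup\{u\}$, namely $u$ itself. So the dependency set is contained in the set of in-neighbours of the $\delta^++1$ vertices $N^+(u)\cup\{u\}$, excluding $u$. The fallback, if this does not close the gap, is to refine the variables instead. Condition on $x_u$ and treat the lopsided dependencies: $A_u$ is increasing in ``badness'', so the lopsided local lemma lets us count only those $u'$ that share an out-neighbour variable. With $d_{\max}$ the dependency degree, the symmetric local lemma requires $e\,p\,(d_{\max}+1)\le1$, and the stated hypothesis should be exactly this condition after the count.

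Given that some colouring avoids every $A_u$, pick a good out-edge at every vertex. Following these edges from any vertex must revisit a vertex, and so produces a directed cycle $v_1\cdots v_\ell v_1$ of good edges. Goodness gives $x_{v_{i+1}}=x_{v_i}w(v_iv_{i+1})$ for every $i$, with indices taken cyclically. Chaining these relations around the cycle gives
\[
x_{v_1}=x_{v_1}\,w(C),
\]
so $w(C)=1_\Gamma$ and $C$ is a zero-sum cycle of $D$.
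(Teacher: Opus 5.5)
\textbf{There is a genuine gap at the step you flagged as the main obstacle.} The rest is correct: the reduction to outdegree exactly $\delta^+$, the value $\Pr(A_u)=(1-1/k)^{\delta^+}$, and the extraction of a cycle with $x_{v_1}=x_{v_1}w(C)$. But you never show that the dependency degree is $\Delta^-\delta^+$ rather than $\Delta^-\delta^++\Delta^-$, and that is the one nontrivial point of the Alon--Linial argument the paper invokes.

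Your first fix does not work. The claimed containment misses the out-neighbours $u'\in N^+(u)$, whose own colour $x_{u'}$ is a variable of $A_u$. It also keeps every in-neighbour of $u$. Once the out-neighbours are restored, you are back at $\Delta^-\delta^++\Delta^-$. Your fallback appeals to a lopsided local lemma because $A_u$ is ``increasing in badness''. However, colours in $\Gamma$ carry no order, and the goodness indicators of different edges are not independent coordinates. So there is no monotone structure to invoke, and as written this is not an argument.

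\textbf{The missing idea.} No lopsidedness is needed, because $A_u$ is genuinely independent of $x_u$. Let $\mathcal F$ be the $\sigma$-algebra generated by $\{x_w: w\notin N^+(u)\}$; by looplessness it contains $x_u$. Conditional on $\mathcal F$, the colours $x_v$ with $v\in N^+(u)$ are still independent and uniform. The event $A_u$ asks each of them to avoid the single value $x_u w(uv)$, so $\Pr(A_u\mid\mathcal F)=(1-1/k)^{\delta^+}$, a constant. Hence $A_u$ is independent of $\mathcal F$. It is therefore mutually independent of every $A_{u'}$ with $(\{u'\}\cup N^+(u'))\cap N^+(u)=\varnothing$.

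\textbf{The resulting count.} For each $v\in N^+(u)$, the remaining $u'\ne u$ are $v$ itself and the at most $\Delta^--1$ in-neighbours of $v$ other than $u$. That gives at most $\delta^+\Delta^-$ dependencies, and the in-neighbours of $u$ drop out entirely. The resulting dependency relation is not symmetric, which the local lemma allows. The symmetric local lemma with $p=(1-1/k)^{\delta^+}$ and dependency degree $\Delta^-\delta^+$ then applies exactly under the stated hypothesis.
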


For an Eulerian digraph with $\delta\le d(v)\le\Delta$, the two
sufficient conditions for a zero-sum cycle are
\[
\begin{array}{rl}
\text{Theorem~\ref{thm:eulerian}:}
  &\displaystyle k\le
    1+\frac{\delta}{e^3(\Delta/\delta)^2},\\[1.2ex]
\text{Alon--Linial:}
  &\displaystyle k<
    \frac{\delta}{1+\log(\delta\Delta+1)}.
\end{array}
\]
The second follows from Proposition~\ref{prop:alon-linial} and
$(1-1/k)^\delta<e^{-\delta/k}$.
When $\Delta/\delta$ is bounded, our theorem allows $k$ of order
$\delta$, rather than $\delta/\log(\delta\Delta)$.  In particular, it
removes the logarithmic loss in the regular case.  More generally, up
to the distinction between $k$ and $k-1$, our theorem is stronger while
$\Delta/\delta$ is below
$e^{-3/2}\sqrt{1+\log(\delta\Delta+1)}$.  For larger degree ratios the
Alon--Linial bound is stronger, and it also applies without the Eulerian
assumption.

Alon~\cite[Proposition~4.1]{AlonDisjoint} observed that combining the
vertex-partitioning argument of Alon, McDiarmid, and
Molloy~\cite[Lemmas~1 and~2]{AlonMcDiarmidMolloy} with the Alon--Linial
theorem gives $\Omega_k(d^2)$ edge-disjoint cycles whose lengths are
divisible by $k$ in every $d$-regular digraph.  Tracking the dependence
on $k$ in the same argument and applying
Proposition~\ref{prop:alon-linial} inside each colour class gives
absolute constants $c_0,C_0>0$ such that $d\ge C_0k\log k$ forces
$c_0d/(k\log k)$ pairwise vertex-disjoint zero-sum cycles and
$c_0d^2/(k\log k)$ pairwise edge-disjoint zero-sum cycles.  Thus
Theorems~\ref{thm:vertex-packing} and~\ref{thm:packing} remove the factor
$\log k$ from both the degree threshold and the packing guarantees.

We extend Friedland's determinant-permanent argument.  We assign a
matrix to each group element and use these matrices as the blocks of a
labelled adjacency matrix.  The resulting determinant distinguishes
cycles whose label product is the identity from those whose label
product is not.  We can then compare a permanent lower bound with a
determinant upper bound as Friedland did.  MacMahon's master
theorem~\cite{MacMahon,Chu} and Kassel and L\'evy's block-determinant
identity~\cite[Theorem~3.1]{KasselLevy} motivate the construction and
the determinant calculation.

For the packing theorems, we apply the same identity to a polynomial
over directed cycle covers.  It shows that some cycle cover contains
order $d/k$ zero-sum cycles, which are pairwise vertex-disjoint.
Deleting the entire cover preserves regularity, so repeating the
argument gives order $d^2/k$ pairwise edge-disjoint zero-sum cycles.
For Eulerian digraphs, we add diagonal weights to allow uncovered
vertices and use the stronger Bethe permanent lower bound.

\subsection{Related work}

Zero-sum problems in combinatorics go back at least to the theorem of
Erd\H{o}s, Ginzburg, and Ziv~\cite{ErdosGinzburgZiv}.  Caro~\cite{Caro}
surveys the area.  For directed cycles, Thomassen~\cite{Thomassen1985}
proved that minimum outdegree $\lfloor\log_2 n\rfloor+1$ forces an even
directed cycle, and constructed examples with minimum outdegree
$\lfloor\frac12\log_2 n\rfloor$ and no such cycle.  Gutin, Sudakov, and
Yeo~\cite{GutinSudakovYeo} modified the construction to give $n$-vertex
digraphs with no even directed cycle in which every vertex has indegree
and outdegree at least $c\log n$ for an absolute constant $c>0$.  Thus
bounds on the minimum indegree and outdegree alone do not force an even
directed cycle.  Under strong
connectivity, Thomassen
proved that minimum indegree and outdegree at least $3$ force a directed
cycle of even total weight under every $\{0,1\}$ edge
weighting~\cite{Thomassen1992}.  Every weak component of a regular digraph is
strongly connected, so this implies the existence of a zero-sum cycle
in every $d$-regular $\mathbb Z_2$-labelled digraph with $d\ge3$.
Diwan~\cite{Diwan} asked for an analogue for $\mathbb Z_k$-weighted
strongly connected digraphs.

The constant $\mathbb Z_k$-labelling by $1$ connects our problem to the
extensive literature on cycles of length modulo
$k$ in undirected graphs~\cite{Erdos1976,BollobasModulo,ThomassenModulo,SudakovVerstraete,BaiGrzesikLiProrok}.  A central minimum-degree question is Dean's conjecture that minimum degree at least $k$ forces a cycle whose length
is divisible by $k$~\cite{Dean1988}.  The conjecture is known for
$k=3,4$ and for every $k\ge6$, leaving only
$k=5$~\cite{ChenSaito,DeanLesniakSaito,LuoMaZhao}.  Related results treat
weaker degree hypotheses, other residue classes, and stronger connectivity
assumptions~\cite{GaoHuoLiuMa,BaiGrzesikLiProrok}.

For arbitrary group labellings in undirected graphs, Diwan~\cite{Diwan} proved that if the vertices and edges of an
undirected graph are weighted by a finite abelian group $\Gamma$, then
minimum degree at least $2|\Gamma|-1$ forces a zero-sum cycle.  He
conjectured that $|\Gamma|+1$ suffices and constructed examples of
minimum degree $|\Gamma|$ with no zero-sum cycle.  This result allows
arbitrary host graphs and also vertex weights, but assumes that
$\Gamma$ is abelian.  Our results instead concern zero-sum cycles in
directed graphs with arbitrary (possibly nonabelian) edge labels.

Alon, McDiarmid, and Molloy~\cite{AlonMcDiarmidMolloy} studied cycle
packings in regular digraphs, proving a quadratic lower bound for
pairwise edge-disjoint directed cycles and conjecturing the sharp bound
$\binom{d+1}{2}$.  Alon~\cite{AlonDisjoint} proved linear and quadratic
lower bounds for vertex-disjoint and edge-disjoint directed cycles,
respectively, under a minimum-outdegree assumption.  These cycles have
no restriction on their labels.  The consequences for zero-sum
packings were described in Section~1.2.

A complementary line of work assumes that the underlying digraph is complete.
For a nontrivial finite group $\Gamma$, let $n(\Gamma)$ be the least
$N$ such that every $\Gamma$-labelling of the complete bidirected graph
on $N$ vertices has a zero-sum cycle.  Alon and
Krivelevich~\cite{AlonKrivelevich} introduced this parameter for cyclic
groups.  The standard ordering construction gives
$n(\mathbb Z_k)\ge k+1$~\cite{MeszarosSteiner,Diwan}.  Subsequent work
proved $n(\Gamma)\le8|\Gamma|$ for finite
abelian groups~\cite{MeszarosSteiner}, then
$n(\Gamma)\le2|\Gamma|-1$ for every nontrivial finite
group~\cite{BerendsohnBoyadzhiyskaKozma,AkramiEtAl}.  Campbell et al.
proved that $n(\Gamma)\le|\Gamma|+1$ for odd-order groups and state that
a proof of the even-order case will appear in a
sequel~\cite{CampbellEtAl}.  Sharper bounds for
elementary abelian groups
appear in~\cite{LetzterMorrison,ChristophEtAl}.  All these results
crucially use the fact that the digraph is complete.  Our results apply
to $d$-regular digraphs and near-regular Eulerian digraphs.

Section~2 develops the determinant identity that we use throughout.
Section~3 gives the short regular proof.  Section~4 proves the packing
theorems in the regular case, and Section~5 proves the Eulerian
existence and packing results.  Section~6 gives two sharpness
constructions, and Section~7 discusses open problems.

\section{The determinant method}

\subsection{Cycle covers and the permanent}

Fix a digraph $D$ with edge labels in a group $\Gamma$ of order $k$.
A \emph{directed cycle cover} of $D$ is a collection of pairwise
vertex-disjoint directed cycles of length at least two that covers every
vertex of $D$.
Number the vertices as $[n]:=\{1,\ldots,n\}$, and let $A=(a_{uv})$ be
the adjacency matrix, so $a_{uv}=1$ when $uv\in E(D)$ and $a_{uv}=0$
otherwise.  Its permanent is
\[
  \per A:=\sum_{\sigma\in S_n}\prod_{v=1}^n a_{v,\sigma(v)},
\]
where $S_n$ is the set of permutations of $[n]$.  A permutation
contributes $1$ exactly when its permutation cycles form a directed
cycle cover of $D$.  Thus $\per A$ counts the directed cycle covers of
$D$.  For $S\subseteq V(D)$, write $A[S]$ for
the principal submatrix on $S$.  Then $\per A[S]$ counts the directed
cycle covers of the subdigraph induced by $S$.  We set
$\per A[\varnothing]=1$.

We use the classical van der Waerden permanent bound in the regular
case.

\begin{lemma}[Van der Waerden]\label{lem:lower}
Let $M$ be a nonnegative $n\times n$ matrix whose row sums and column
sums all equal a positive number $d$.  Then
\[
  (\per M)^{1/n}>\frac de.
\]
\end{lemma}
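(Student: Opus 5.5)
We reduce to the van der Waerden theorem (Egorychev--Falikman) for doubly stochastic matrices and Stirling's formula. Put $P:=M/d$. Since $M$ is nonnegative with all row and column sums equal to $d>0$, the matrix $P$ is doubly stochastic. The permanent is homogeneous of degree $n$ in the entries, so
\[
  \per M=d^n\per P .
\]
The van der Waerden bound states that every $n\times n$ doubly stochastic matrix satisfies $\per P\ge n!/n^n$, with equality exactly for the matrix with all entries $1/n$. We invoke this as the classical result it is, rather than reprove it.

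It remains to show that $n!/n^n>e^{-n}$ strictly for every $n\ge1$. Using $e^n=\sum_{j\ge0}n^j/j!$, the $j=n$ term alone gives
\[
  e^n>\frac{n^n}{n!},
\]
and the inequality is strict because the other terms are positive. Equivalently, $(n!)^{1/n}>n/e$. Combining the two estimates,
\[
  (\per M)^{1/n}\ge d\Bigl(\frac{n!}{n^n}\Bigr)^{1/n}>\frac de,
\]
which is the claim.

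The only nontrivial input is the van der Waerden inequality itself. If a self-contained argument were wanted, the natural route is Gurvits' capacity proof. One considers $p(x)=\prod_i(Px)_i$, notes that $\inf_{x>0,\ \prod x_i=1}p(x)\ge1$ for doubly stochastic $P$ by AM--GM, and then uses the real-stability argument, in which each partial derivative loses at most a factor $\bigl((k-1)/k\bigr)^{k-1}$. This yields $\per P\ge\prod_k\bigl((k-1)/k\bigr)^{k-1}=n!/n^n$. For this paper, however, citing the theorem suffices. The weaker bound $>(d/e)^n$ also follows directly from Schrijver's or Gurvits' bound, so no step should be an obstacle.
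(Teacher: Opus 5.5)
Your proposal is correct and follows the paper's proof: you normalize to the doubly stochastic matrix $M/d$, apply the Egorychev--Falikman bound $\per(M/d)\ge n!/n^n$, and use $n!/n^n>e^{-n}$. Your justification of that last strict inequality through the single term $n^n/n!$ of the series for $e^n$ supplies a step the paper leaves implicit. Your remarks on Gurvits' capacity proof are accurate but not needed.
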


\begin{proof}
$M/d$ is doubly stochastic, so the van der Waerden permanent
theorem~\cite{Egorychev,Falikman} gives
\[
  \per M\ge d^n\frac{n!}{n^n}>
  \left(\frac de\right)^n.
\]
\end{proof}

\subsection{Centered permutation matrices}

For $h\in\Gamma$, let $P_h$ be the $k\times k$ permutation matrix of
the map $x\mapsto hx$ on $\Gamma$.  Then
\[
  P_gP_h=P_{gh}.
\]
Let $J$ be the $k\times k$ all-ones matrix, and define
\[
  S_h:=P_h-\frac1kJ.
\]
These centered matrices retain the multiplicative rule and have the
trace values needed below.

\begin{lemma}\label{lem:centered}
For all $g,h\in\Gamma$,
\[
  S_gS_h=S_{gh},
  \qquad
  \tr S_h=
  \begin{cases}
    k-1,&h=1_\Gamma,\\
    -1,&h\ne1_\Gamma.
  \end{cases}
\]
Every row of every $S_h$ has squared Euclidean norm $(k-1)/k$.
\end{lemma}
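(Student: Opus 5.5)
The plan is to derive all three claims from a few elementary identities for $J$ and the permutation matrices $P_h$. First, since $P_h$ is a permutation matrix, every row and column of $P_h$ contains exactly one $1$. Hence
\[
  P_hJ=JP_h=J .
\]
Also $J^2=kJ$. We then expand the product directly:
\[
  S_gS_h=P_gP_h-\tfrac1k P_gJ-\tfrac1k JP_h+\tfrac1{k^2}J^2
        =P_{gh}-\tfrac1kJ-\tfrac1kJ+\tfrac1kJ
        =S_{gh},
\]
using $P_gP_h=P_{gh}$. This is the multiplicative rule.

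For the trace, linearity gives $\tr S_h=\tr P_h-\tfrac1k\tr J=\tr P_h-1$. The trace of a permutation matrix counts its fixed points. The map $x\mapsto hx$ has a fixed point $x$ exactly when $hx=x$, that is, when $h=1_\Gamma$, by cancellation in $\Gamma$. So $\tr P_{1_\Gamma}=k$ and $\tr P_h=0$ for $h\ne1_\Gamma$. This gives $\tr S_{1_\Gamma}=k-1$ and $\tr S_h=-1$ otherwise.

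For the row norms, each row of $S_h$ has one entry $1-\tfrac1k$, in the position of the $1$ of $P_h$, and $k-1$ entries equal to $-\tfrac1k$. Its squared norm is therefore
\[
  \Bigl(1-\tfrac1k\Bigr)^2+(k-1)\tfrac1{k^2}
  =\frac{(k-1)^2+(k-1)}{k^2}
  =\frac{k-1}{k}.
\]

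There is no substantive obstacle here. The only points needing care are the absorption identities $P_hJ=JP_h=J$, which make the cross terms collapse, and the fixed-point count, which relies on cancellation in the group.
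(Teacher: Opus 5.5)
Your proposal is correct and follows the paper's proof: the same absorption identities $P_hJ=JP_h=J$ and $J^2=kJ$ give the multiplicative rule, and the trace and row-norm computations are the same. The only difference is that you justify $\tr P_h=0$ for $h\ne1_\Gamma$ explicitly by cancellation in $\Gamma$, which the paper states without comment.
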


\begin{proof}
Since $P_hJ=JP_h=J$ and $J^2=kJ$,
\[
  S_gS_h
  =\left(P_g-\frac1kJ\right)\left(P_h-\frac1kJ\right)
  =P_{gh}-\frac1kJ
  =S_{gh}.
\]
The permutation $P_1$ has trace $k$, whereas $P_h$ has trace zero for
$h\ne1_\Gamma$.  Since $\tr(J/k)=1$, this gives the trace formula.  A
row of $S_h$ has one entry $1-1/k$ and $k-1$ entries $-1/k$, so its
squared norm is $\left(1-\frac1k\right)^2+(k-1)\frac1{k^2}
  =\frac{k-1}{k}$.
\end{proof}

\subsection{The squarefree algebra}

We work in the commutative algebra
\[
  \mathcal A:=
  \mathbb C[z_1,\ldots,z_n]/(z_1^2,\ldots,z_n^2).
\]
For $S\subseteq[n]$, put
\[
  z_S:=\prod_{v\in S}z_v,
  \qquad z_\varnothing:=1.
\]
For a polynomial $f$, write $[z_S]f$ for the coefficient of $z_S$ in
$f$.
Every element of $\mathcal A$ has a unique expression
\[
  \sum_{S\subseteq[n]}a_Sz_S,
  \qquad a_S\in\mathbb C,
\]
and $z_Sz_T=z_{S\cup T}$ when $S\cap T=\varnothing$, while
$z_Sz_T=0$ otherwise.
Thus passing from an ordinary polynomial to its image in $\mathcal A$
discards the monomials divisible by some $z_v^2$ and does not change
any squarefree coefficient.

\subsection{The determinant--permanent correspondence}

Define the cycle-collection generating polynomial
\[
  \widetilde F(\mathbf z):=
  \prod_C(1+z_{V(C)}),
  \qquad \mathbf z:=(z_1,\ldots,z_n),
\]
where the product is over the simple directed cycles of $D$.  If two
cycles selected in the expansion intersect, the corresponding monomial
contains $z_v^2$ for some vertex $v$ and therefore vanishes in
$\mathcal A$.  Consequently,
expanding $\widetilde F$ selects collections of pairwise vertex-disjoint
cycles, and
\begin{equation}\label{eq:cycle-cover-permanent}
  [z_S]\widetilde F=\per A[S]
  \qquad(S\subseteq V(D)).
\end{equation}

When $D$ has no zero-sum directed cycle, our goal is to construct a
determinant polynomial $F$ such that
\[
  F=\widetilde F\qquad\text{in }\mathcal A.
\]
Since passing to $\mathcal A$ preserves squarefree coefficients, this
would express every $\per A[S]$ as a squarefree coefficient of a
determinant.  We can then bound these coefficients using Hadamard's
inequality.  We will use the following block-determinant identity.
\begin{lemma}\label{lem:cycle-product}
Assign a $q\times q$ complex matrix $B_{uv}$ to every edge $uv$ of
$D$, and form the $nq\times nq$ block matrix $B$ whose $uv$-block is
$B_{uv}$ when $uv\in E(D)$ and zero otherwise.  For a directed cycle
$C$, choose an initial vertex and write
$C=v_1v_2\cdots v_\ell v_1$ in its directed cyclic order.  With this
choice, put
\[
  B(C):=B_{v_1v_2}B_{v_2v_3}\cdots B_{v_\ell v_1}.
\]
Although the matrix $B(C)$ may depend on the initial vertex, cyclicity
of the trace implies that $\tr(B(C))$ does not.  Thus the quantity
appearing below is well defined.  Put
\[
  Z_q:=\diag(z_1I_q,\ldots,z_nI_q),
  \qquad
  F(\mathbf z):=\det(I_{nq}-Z_qB).
\]
Then, in $\mathcal A$,
\[
  F(\mathbf z)
  =
  \prod_C
  \left(1-\tr(B(C))z_{V(C)}\right),
\]
where the product is over the simple directed cycles of $D$.
Moreover, as an ordinary polynomial before passing to $\mathcal A$,
$\deg_{z_v}F\le q$ for every $v$.

In particular, take $q=k$ and assign $B_{uv}=S_{w(uv)}$ to every edge
$uv$.  If $D$ has no zero-sum directed cycle, then
\[
  [z_S]F=\per A[S]
  \qquad(S\subseteq V(D)).
\]
\end{lemma}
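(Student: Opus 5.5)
Since $\mathcal A$ is commutative and every $z_v$ is nilpotent, I will use the identity $\det(I-X)=\exp(\tr\log(I-X))$ for a matrix $X$ with nilpotent entries, where $\log(I-X)=-\sum_{m\ge1}X^m/m$. Both series are finite in $\mathcal A$. This gives
\[
  F=\exp\Bigl(-\sum_{m\ge1}\tfrac1m\tr\bigl((Z_qB)^m\bigr)\Bigr).
\]
Since $Z_q$ is block scalar, $(Z_qB)^m$ has $uv$-block $\sum z_{u_0}z_{u_1}\cdots z_{u_{m-1}}B_{u_0u_1}\cdots B_{u_{m-1}u_m}$. The sum runs over walks $u=u_0,\dots,u_m=v$. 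Taking the block trace, $\tr((Z_qB)^m)$ is a sum over closed walks of length $m$ with a starting point. Each walk contributes $\tr$ of its block product times the product of the $z$'s of its visited vertices. In $\mathcal A$ only walks with distinct vertices survive, that is, directed cycles of length $\ell=m\ge2$. Loops do not occur, since $D$ is loopless. Each cycle arises from $\ell$ starting points, and by cyclicity of the trace each gives the same value $\tr B(C)$. The factor $1/m$ therefore cancels, and
\[
  -\sum_m\tfrac1m\tr\bigl((Z_qB)^m\bigr)=-\sum_C\tr(B(C))\,z_{V(C)}.
\]
Exponentiating in $\mathcal A$ gives $\prod_C\exp(-\tr(B(C))z_{V(C)})$. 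Because $z_{V(C)}^2=0$, each factor equals $1-\tr(B(C))z_{V(C)}$, which is the claimed product.

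The one point that needs care is the justification of $\det\exp=\exp\tr$ over $\mathcal A$. I would handle it by working with $t$-scaled variables: consider $\det(I-tZ_qB)$ over the formal power series ring $\mathcal A[[t]]$, or over $\mathbb C[[z,t]]$ before quotienting. There the identity $\log\det(I-tX)=\tr\log(I-tX)$ is standard; for instance, one can differentiate in $t$ and use Jacobi's formula. Alternatively, I can prove the identity in $\mathbb C[[z_1,\dots,z_n]]$ and then reduce modulo $(z_v^2)$, since reduction is a ring homomorphism. After that, set $t=1$, which is legitimate because everything is polynomial in $\mathcal A$.

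For the degree bound, expand $\det(I_{nq}-Z_qB)$ by the Leibniz formula. The variable $z_v$ appears only in the $q$ rows of block $v$, each at most linearly. Hence every term has $z_v$-degree at most $q$.

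For the specialization, take $q=k$ and $B_{uv}=S_{w(uv)}$. By Lemma~\ref{lem:centered}, $B(C)=S_{w(C)}$. If $D$ has no zero-sum cycle, then $w(C)\ne1_\Gamma$ for every $C$, so $\tr B(C)=-1$. Then $F=\prod_C(1+z_{V(C)})=\widetilde F$ in $\mathcal A$. Squarefree coefficients are unchanged by passing to $\mathcal A$, so \eqref{eq:cycle-cover-permanent} gives $[z_S]F=\per A[S]$.
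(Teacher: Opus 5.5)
Your proof is correct, but for the product formula it takes a different route from the paper. The degree bound and the specialization to $B_{uv}=S_{w(uv)}$ are argued exactly as in the paper.

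\textbf{The paper's route.} It expands $\det(I_{nq}-Z_qB)$ by the Leibniz formula over $S_{nq}$. In $\mathcal A$ only permutations moving at most one pair $(v,a)$ per vertex survive. These are put in bijection with pairs consisting of a good $\sigma\in S_n$ and a choice function $a\colon U_\sigma\to[q]$, and the paper checks that $\operatorname{sgn}(\pi)=\operatorname{sgn}(\sigma)$. The sum over $a$ then factors cycle by cycle into traces, and the signs $(-1)^{\ell-1}(-1)^{\ell}=-1$ give the factor $-\tr(B(C))z_{V(C)}$.

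\textbf{Your route.} You pass instead to $\log F=\tr\log(I-Z_qB)=-\sum_m\frac1m\tr\bigl((Z_qB)^m\bigr)$, where each trace is a sum over rooted closed walks. Nilpotency of the $z_v$ automatically kills every non-simple walk. The $m$ rootings of a cycle cancel the $1/m$. Exponentiating a sum of commuting square-zero terms then yields the product.

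\textbf{Comparison.} Your version is shorter, avoids all the sign and bijection bookkeeping, and exhibits the product structure as an instance of the exponential formula. The price is that you must divide by $m$ and by $j!$, so you need a $\mathbb Q$-algebra. You also need $\det\exp=\exp\tr$ over a nilpotent commutative ring. Either of your two justifications supplies this:
\begin{itemize}
\item the $t$-scaling and Jacobi-formula argument, followed by evaluation at $t=1$ in $\mathcal A[t]$;
\item reduction from $\mathbb C[[z]]$, a ring map that kills $(z)^{n+1}$ and hence commutes with the adically convergent series.
\end{itemize}
The paper's expansion is entirely finite and combinatorial, and it would work verbatim over any commutative ring.
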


\begin{proof}
Put $M:=I_{nq}-Z_qB$, and index its rows and columns by pairs
$(v,a)\in[n]\times[q]$.  Thus $S_{nq}$ denotes the permutations of
these $nq$ pairs.  The determinant expansion is
\[
  F(\mathbf z)
  =
  \sum_{\pi\in S_{nq}}
  \operatorname{sgn}(\pi)
  \prod_{(v,a)\in[n]\times[q]}
  M_{(v,a),\pi(v,a)}.
\]
Since $D$ is loopless, $B_{vv}=0$.  Consequently,
\[
  M_{(v,a),\pi(v,a)}
  =
  \begin{cases}
    1,
      & \pi(v,a)=(v,a),\\[2mm]
    -z_v(B_{vu})_{ab},
      & \pi(v,a)=(u,b)\ne(v,a).
  \end{cases}
\]
If $\pi$ moves two pairs with the same first coordinate $v$, its
term is divisible by $z_v^2$ and therefore vanishes in $\mathcal A$.
Thus only permutations that move at most one pair $(v,a)$ for each
$v\in[n]$ survive in $\mathcal A$.  For any such permutation $\pi$ with a
nonzero term, let $U$ be the set of first coordinates of the pairs moved by
$\pi$.  There is a unique function $a\colon U\to[q]$ such that these pairs
are $(v,a(v))$, $v\in U$.  Since $\pi$ permutes the set of moved pairs, it
defines a permutation $\sigma$ of $U$ by
\[
  \pi(v,a(v))=(\sigma(v),a(\sigma(v)))
  \qquad(v\in U).
\]
Extend $\sigma$ to an element of $S_n$ by fixing every vertex outside $U$.
Call a permutation $\sigma\in S_n$ \emph{good} if
$v\sigma(v)\in E(D)$ at every nonfixed vertex.  The permutation obtained
above is good.
The corresponding nontrivial cycles of $\pi$ and $\sigma$ have the same
lengths, so
$\operatorname{sgn}(\pi)=\operatorname{sgn}(\sigma)$.

Conversely, let $\sigma$ be good, put
\[
  U_\sigma:=\{v\in V(D):\sigma(v)\ne v\},
\]
and choose a function $a\colon U_\sigma\to[q]$.  Define
\[
  \pi(v,a(v)):=(\sigma(v),a(\sigma(v)))
  \qquad(v\in U_\sigma)
\]
and let $\pi$ fix all remaining pairs.  This reverses the preceding
construction, and choices producing a zero matrix entry simply
contribute zero.  Hence, in $\mathcal A$,
\[
  F(\mathbf z)
  =
  \sum_{\sigma\ \mathrm{good}}
  \operatorname{sgn}(\sigma)
  \sum_{a\colon U_\sigma\to[q]}
  \prod_{v\in U_\sigma}
  M_{(v,a(v)),(\sigma(v),a(\sigma(v)))}.
\]

We continue to work in $\mathcal A$, where $z_v^2=0$ for every $v$.
Fix a good $\sigma$, and let $\mathcal K$ be its collection of
nontrivial cycles.  These cycles have disjoint vertex sets whose union is
$U_\sigma$, so choosing a function $a\colon U_\sigma\to[q]$ amounts to
choosing its restrictions to the cycles independently.  For each
$C\in\mathcal K$, write $C=v_1\cdots v_\ell v_1$ in directed cyclic order,
write $a_j$ for the value at $v_j$, and put $v_{\ell+1}:=v_1$ and
$a_{\ell+1}:=a_1$.  It follows that
\[
\begin{aligned}
  &\sum_{a\colon U_\sigma\to[q]}
  \prod_{v\in U_\sigma}
  M_{(v,a(v)),(\sigma(v),a(\sigma(v)))}\\
  &\quad=
  \prod_{C\in\mathcal K}
  \left(
    \sum_{(a_1,\ldots,a_\ell)\in[q]^\ell}
    \prod_{j=1}^{\ell}
    M_{(v_j,a_j),(v_{j+1},a_{j+1})}
  \right).
\end{aligned}
\]
Since $\operatorname{sgn}(\sigma)=\prod_{C\in\mathcal K}(-1)^{|C|-1}$, the
factor contributed by $C=v_1\cdots v_\ell v_1$ is

\[
\begin{aligned}
  &(-1)^{\ell-1}
  \sum_{(a_1,\ldots,a_\ell)\in[q]^\ell}
  \prod_{j=1}^{\ell}
  M_{(v_j,a_j),(v_{j+1},a_{j+1})}\\
  &\quad=(-1)^{\ell-1}(-1)^\ell z_{V(C)}
  \sum_{(a_1,\ldots,a_\ell)\in[q]^\ell}
  \prod_{j=1}^{\ell}
  (B_{v_jv_{j+1}})_{a_ja_{j+1}}\\
  &\quad=
  -z_{V(C)}
  \tr\left(B_{v_1v_2}B_{v_2v_3}\cdots B_{v_\ell v_1}\right)\\
  &\quad=
  -\tr(B(C))z_{V(C)}.
\end{aligned}
\]
Here $(-1)^{\ell-1}$ is the sign of the permutation cycle,
$(-1)^\ell$ comes from its $\ell$ entries of $-Z_qB$, and the remaining
sum is the trace of the displayed matrix product.

Since $\sigma$ is determined by $\mathcal K$ and fixes every remaining
vertex, summing over $\sigma$ and then expanding a product over all
directed cycles gives
\[
  F(\mathbf z)
  =
  \sum_{\mathcal K}
  \prod_{C\in\mathcal K}
  \left(-\tr(B(C))z_{V(C)}\right)
  =
  \prod_C\left(1-\tr(B(C))z_{V(C)}\right)
  \qquad\text{in }\mathcal A,
\]
where $\mathcal K$ ranges over collections of pairwise vertex-disjoint
directed cycles, including the empty collection.  Indeed, every term
in the last product that selects two intersecting cycles contains
$z_v^2$ for some $v$ and therefore vanishes in $\mathcal A$.

Finally, when $F$ is viewed as an ordinary polynomial, $z_v$ occurs
only in the $q$ rows of the block row indexed by $v$.  Multilinearity
of the determinant therefore gives $\deg_{z_v}F\le q$.

For the final statement, Lemma~\ref{lem:centered} gives
$B(C)=S_{w(C)}$ for every directed cycle $C$.  If $D$ has no zero-sum
cycle, then $\tr B(C)=-1$, so the identity just proved gives
\[
  F(\mathbf z)=\prod_C(1+z_{V(C)})
  =\widetilde F(\mathbf z)
  \qquad\text{in }\mathcal A.
\]
Equality in $\mathcal A$ implies equality of squarefree coefficients,
so~\eqref{eq:cycle-cover-permanent} gives
\[
  [z_S]F=[z_S]\widetilde F=\per A[S]
  \qquad(S\subseteq V(D)).\qedhere
\]
\end{proof}

\begin{remark}
The idea of using a determinant to encode collections of cycles goes
back to MacMahon's master theorem~\cite{MacMahon}.  Chu made explicit
the resulting connection between coefficients of determinants and
permanents~\cite{Chu}.  Kassel and L\'evy proved a block-matrix identity
in which a directed cycle contributes the trace of the product of the
matrices on its edges, namely the sum of its diagonal
entries~\cite[Theorem~3.1]{KasselLevy}.  Lemma~\ref{lem:cycle-product} is
the squarefree form of this identity used here.
\end{remark}

\begin{lemma}\label{lem:hadamard-coefficient}
Let $D$ be a $d$-regular digraph on $[n]$, where $d\ge1$, and let
$q\ge1$.  Assign a $q\times q$ complex matrix $B_{uv}$ to every edge
$uv$, and let $B$ be the $nq\times nq$ block matrix whose $uv$-block is
$B_{uv}$ when $uv\in E(D)$ and zero otherwise.  Put
\[
  Z_q:=\diag(z_1I_q,\ldots,z_nI_q),
  \qquad F(\mathbf z):=\det(I_{nq}-Z_qB).
\]
If every row of each $B_{uv}$ has Euclidean norm at most some $\rho>0$,
then
\[
  \left|[z_1\cdots z_n]F\right|^{1/n}
  <\rho\sqrt{eqd}.
\]
\end{lemma}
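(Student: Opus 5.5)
The plan is to extract the coefficient of $z_1\cdots z_n$ by a Cauchy integral over a torus, and to bound $|F|$ on that torus with Hadamard's inequality, as in Friedland's argument. Here $F$ is treated as an ordinary polynomial, not as an element of $\mathcal A$. For any radius $r>0$, Cauchy's formula on the torus $|z_v|=r$ gives
\[
  [z_1\cdots z_n]F=\frac{1}{(2\pi i)^n}\oint\cdots\oint \frac{F(\mathbf z)}{z_1\cdots z_n}\,\frac{dz_1}{z_1}\cdots\frac{dz_n}{z_n},
\]
so
\[
  \left|[z_1\cdots z_n]F\right|\le r^{-n}\max_{|z_v|=r}|F(\mathbf z)|.
\]
The degree bound $\deg_{z_v}F\le q$ is not needed for this step; all we use is that $F$ is a polynomial.

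Next I bound $|F(\mathbf z)|$ on the torus. The row of $I_{nq}-Z_qB$ indexed by $(v,a)$ has a $1$ in the diagonal position. Its remaining entries are $-z_v$ times the $a$-th rows of the blocks $B_{vu}$ for the $d$ out-neighbours $u$ of $v$. These blocks occupy pairwise disjoint column ranges. None of them is the diagonal block, because $D$ is loopless and so $B_{vv}=0$. Hence the squared Euclidean norm of the row is at most $1+r^2d\rho^2$. Hadamard's inequality then gives
\[
  |F(\mathbf z)|\le(1+r^2d\rho^2)^{nq/2}.
\]

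Finally I optimize over $r$. Writing $t=r^2d\rho^2$, the bound becomes
\[
  \left|[z_1\cdots z_n]F\right|^{1/n}\le\rho\sqrt d\,t^{-1/2}(1+t)^{q/2}.
\]
Choosing $t=1/q$ yields $\rho\sqrt{qd}\,(1+1/q)^{q/2}$. Since $(1+1/q)^q<e$ for every $q\ge1$, this is strictly less than $\rho\sqrt{eqd}$. This gives the claimed strict inequality.

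The only delicate points are checking that the diagonal $1$ does not overlap with the support of the $B_{vu}$ blocks, which is where looplessness is used, and making sure the strictness comes from $(1+1/q)^q<e$. I expect no real obstacle beyond these.
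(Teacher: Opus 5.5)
Your proposal is correct and is essentially the paper's argument: both extract the coefficient by averaging $(\prod_v z_v^{-1})F$ over the torus $|z_v|=s$. Both then bound each evaluation by Hadamard's inequality with row norms at most $\sqrt{1+d\rho^2s^2}$, and take $s^2=1/(qd\rho^2)$. The only difference is that you integrate over the full circle, whereas the paper averages over the $(q+1)$st roots of unity; that is why the paper needs $\deg_{z_v}F\le q$ and your continuous Cauchy-integral version does not.
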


\begin{proof}
Lemma~\ref{lem:cycle-product} gives $\deg_{z_v}F\le q$ for every $v$.
First consider a polynomial
\[
  g(z)=c_0+c_1z+\cdots+c_qz^q.
\]
Let $u$ be uniform on the $(q+1)$st roots of unity.  For every integer
$a$,
\[
  \mathbb E[u^a]
  =
  \begin{cases}
    1,&q+1\mid a,\\
    0,&q+1\nmid a.
  \end{cases}
\]
Consequently, for every $s>0$,
\[
  s^{-1}\mathbb E\bigl[u^{-1}g(su)\bigr]=c_1,
\]
because after expansion the average of $u^{a-1}$ is zero for
$0\le a\le q$ unless $a=1$.  Applying this identity successively to
$z_1,\ldots,z_n$ gives
\[
  [z_1\cdots z_n]F
  =
  s^{-n}\mathbb E\left[
    \left(\prod_{v=1}^n u_v^{-1}\right)
    F(su_1,\ldots,su_n)
  \right],
\]
where $u_1,\ldots,u_n$ are independent and uniform on the
$(q+1)$st roots of unity.

Fix the $u_v$.  Every row of the matrix defining
$F(su_1,\ldots,su_n)$ is indexed by some $(v,a)$.  Besides its identity
entry, it contains the $a$th row of $-su_vB_{vw}$ for each edge $vw$
leaving $v$.  This row has Euclidean norm at most $s\rho$, since
$|u_v|=1$.  Simplicity and looplessness place the identity entry and
the $d$ matrix rows on disjoint coordinates.  Thus the squared norm of
the whole row is at most
\[
  1+\sum_{w:\,vw\in E(D)}s^2\rho^2=1+d\rho^2s^2.
\]
Hadamard's inequality and the preceding coefficient identity give
\[
  \left|[z_1\cdots z_n]F\right|
  \le
  s^{-n}(1+d\rho^2s^2)^{nq/2}.
\]
Taking $n$th roots and choosing $s=(qd\rho^2)^{-1/2}$, we obtain
\[
  \left|[z_1\cdots z_n]F\right|^{1/n}
  \le
  \rho\sqrt{qd}\left(1+\frac1q\right)^{q/2}
  <\rho\sqrt{eqd}.
\]
\end{proof}

\section{The regular case}

\begin{theorem}\label{thm:main}
Let $\Gamma$ be a finite group of order $k\ge2$.  Let $D$ be a
$d$-regular digraph with a $\Gamma$-labelling.  If
\[
  d\ge e^3(k-1),
\]
then $D$ contains a zero-sum directed cycle.
\end{theorem}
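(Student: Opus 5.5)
Argue by contradiction: suppose $D$ is $d$-regular on $[n]$ with a $\Gamma$-labelling and has no zero-sum directed cycle, and compare a permanent lower bound with a determinant upper bound in the manner of Friedland.

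Take $q=k$ and assign $B_{uv}=S_{w(uv)}$ to every edge. The final statement of Lemma~\ref{lem:cycle-product} applied with $S=V(D)$ gives
\[
  [z_1\cdots z_n]F=\per A,
  \qquad F(\mathbf z)=\det(I_{nk}-Z_kB).
\]

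For the lower bound, $A$ is a nonnegative matrix whose row and column sums all equal $d\ge1$. Lemma~\ref{lem:lower} therefore gives $(\per A)^{1/n}>d/e$.

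For the upper bound, Lemma~\ref{lem:centered} shows every row of every $S_h$ has Euclidean norm $\rho=\sqrt{(k-1)/k}$. Lemma~\ref{lem:hadamard-coefficient} with $q=k$ then gives
\[
  (\per A)^{1/n}=\left|[z_1\cdots z_n]F\right|^{1/n}
  <\sqrt{\tfrac{k-1}{k}}\sqrt{ekd}=\sqrt{e(k-1)d}.
\]

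Combining the two bounds gives $d/e<\sqrt{e(k-1)d}$, that is, $d<e^3(k-1)$. This contradicts the hypothesis, so a zero-sum cycle exists.

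All the substantive work is already in Lemmas~\ref{lem:cycle-product} and~\ref{lem:hadamard-coefficient}. The only point to check is that the strict inequalities line up: van der Waerden is strict, and the Hadamard bound is strict because $(1+1/k)^{k/2}<\sqrt e$. This is what lets the non-strict hypothesis $d\ge e^3(k-1)$ suffice. It also needs $n\ge1$ and $d\ge1$, which hold because $D$ is nonempty and $d\ge e^3>0$.
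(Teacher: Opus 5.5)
Your proposal is correct and follows essentially the same route as the paper. It obtains $\per A=[z_1\cdots z_n]F$ from Lemma~\ref{lem:cycle-product} with $B_{uv}=S_{w(uv)}$, lower-bounds it by $(d/e)^n$ via van der Waerden, and upper-bounds it by $\bigl(\sqrt{e(k-1)d}\bigr)^n$ via Lemma~\ref{lem:hadamard-coefficient} with $q=k$ and $\rho=\sqrt{(k-1)/k}$, where the strict inequality lets the non-strict hypothesis $d\ge e^3(k-1)$ give a contradiction.
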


\begin{proof}
Suppose that $D$ has no zero-sum directed cycle.  Write
$V(D)=[n]:=\{1,\ldots,n\}$, let $A$ be the adjacency matrix of $D$,
and form the $nk\times nk$ block matrix $B$ by setting
$B_{uv}:=S_{w(uv)}$ for $uv\in E(D)$ and $B_{uv}:=0$ otherwise.
Lemma~\ref{lem:cycle-product} associates with these matrices the
determinant polynomial
\[
  F(\mathbf z):=\det(I_{nk}-Z_kB),
  \qquad
  Z_k:=\diag(z_1I_k,\ldots,z_nI_k).
\]
Since $A$ has all row and column sums equal to $d$,
Lemma~\ref{lem:lower} gives
\begin{equation}\label{eq:regular-lower}
  (\per A)^{1/n}>\frac de.
\end{equation}
On the other hand, Lemma~\ref{lem:cycle-product} gives
\[
  \per A=[z_1\cdots z_n]F.
\]
Lemma~\ref{lem:hadamard-coefficient} bounds this coefficient by
\[
  \left|[z_1\cdots z_n]F\right|^{1/n}
  <\rho\sqrt{eqd},
\]
where $q$ is the block size and $\rho$ bounds the row norms of the
matrices assigned to the edges.  Here those matrices are
$S_{w(uv)}$, so $q=k$ and $\rho=\sqrt{(k-1)/k}$.  Substitution gives
\[
  (\per A)^{1/n}
  <
  \sqrt{\frac{k-1}{k}}\sqrt{ekd}
  =
  \sqrt{e(k-1)d}.
\]
If $d\ge e^3(k-1)$, then the right-hand side is at most $d/e$,
contradicting~\eqref{eq:regular-lower}.
\end{proof}

\section{Packing zero-sum cycles}\label{sec:packing}

The cycles in a directed cycle cover are pairwise vertex-disjoint.  We
first show that some cover contains many zero-sum cycles.

\begin{theorem}[Vertex-disjoint zero-sum cycles]
\label{thm:vertex-packing}
Let $\Gamma$ be a finite group of order $k\ge2$.  Every $d$-regular
$\Gamma$-labelled digraph, with $d\ge1$, has a directed cycle cover
containing at least
\[
  \left\lfloor\frac{d}{25k}\right\rfloor
\]
zero-sum cycles.
\end{theorem}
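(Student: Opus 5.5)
The plan is to use Lemma~\ref{lem:cycle-product} with block matrices whose cycle traces separate zero-sum cycles from the others. With these blocks, the top coefficient of the determinant becomes a polynomial in one parameter evaluated at the cycle covers. That coefficient is small by Lemma~\ref{lem:hadamard-coefficient}. If no cover had many zero-sum cycles, this smallness would contradict the van der Waerden bound $\per A>(d/e)^n$ of Lemma~\ref{lem:lower}. The details of the final comparison are not checked.

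\textbf{Step 1 (the one-parameter family).} For integers $a\ge1$, assign to each edge $uv$ the block-diagonal matrix
\[
  B^{(a)}_{uv}:=\underbrace{S_{w(uv)}\oplus\cdots\oplus S_{w(uv)}}_{a}\oplus I_{a-1},
  \qquad q_a:=ak+a-1 .
\]
By Lemma~\ref{lem:centered} these blocks are multiplicative along a cycle, so $B^{(a)}(C)=S_{w(C)}^{\oplus a}\oplus I_{a-1}$. Hence
\[
  \tr B^{(a)}(C)=a(k-1)+(a-1)=ak-1 \quad\text{if $C$ is zero-sum,}
  \qquad
  \tr B^{(a)}(C)=-a+(a-1)=-1\quad\text{otherwise.}
\]
Write $\mathcal C$ for the set of directed cycle covers and $m(K)$ for the number of zero-sum cycles in a cover $K$. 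Lemma~\ref{lem:cycle-product}, read off at the coefficient of $z_1\cdots z_n$, then gives
\[
  [z_1\cdots z_n]F_a=\sum_{K\in\mathcal C}x_a^{\,m(K)}=:P(x_a),
  \qquad x_a:=1-ak .
\]
Here $P(x)=\sum_j N_jx^j$, where $N_j$ is the number of covers with exactly $j$ zero-sum cycles. Since the $N_j$ are counts of covers, $P(1)=\per A>(d/e)^n$.

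\textbf{Step 2 (upper bounds at the nodes).} Every row of $B^{(a)}_{uv}$ is either a row of some $S_h$, of norm $\sqrt{(k-1)/k}<1$, or a standard basis vector. So $\rho=1$ works in Lemma~\ref{lem:hadamard-coefficient}, giving
\[
  |P(x_a)|<\bigl(e(ak+a-1)d\bigr)^{n/2}\le\bigl(2eakd\bigr)^{n/2}.
\]
I may also allow the scalar summand to be $\beta I_b$ with a general $b$, if a better spread of nodes is needed.

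\textbf{Step 3 (contradiction by interpolation).} Suppose every cover has $m(K)<m_0:=\lfloor d/(25k)\rfloor$. Then $P$ has degree less than $m_0$, so Lagrange interpolation at the nodes $x_1,\dots,x_{m_0}$ expresses $P(1)$ exactly:
\[
  P(1)=\sum_{a=1}^{m_0}L_a(1)P(x_a).
\]
The nodes form an arithmetic progression with step $k$, and $1$ lies at distance $ak$ from $x_a$. So
\[
  |L_a(1)|=\prod_{b\ne a}\frac{bk}{|a-b|k}=\binom{m_0}{a},
\]
and $\sum_a|L_a(1)|<2^{m_0}$. Combining with Step 2,
\[
  (d/e)^n<P(1)<2^{m_0}\,(2em_0kd)^{n/2}\le 2^{m_0}\bigl(2ed^2/25\bigr)^{n/2}.
\]
Since $m_0\le n/2$ (each cycle has at least two vertices), the right side is at most
\[
  \bigl(\sqrt2\cdot\sqrt{2e}\,d/5\bigr)^n=\bigl(2\sqrt e\,d/5\bigr)^n .
\]
This is less than $(d/e)^n$ only if $2e^{3/2}<5$, which is false. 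So the constants must be tuned: either choose nodes more cleverly, or use more of the slack from $m_0\le n/2$.

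\textbf{Main obstacle.} The crux is exactly this bookkeeping: making the interpolation loss $\sum_a|L_a(1)|$ and the Hadamard growth in $a$ fit within the $\sqrt{d}$-type gap. I would first try shifting the nodes, taking $a$ from a range $[A,A+m_0)$ with $A$ a small multiple of $m_0$, which makes the Lagrange weights at $1$ smaller. I would also try exploiting $\deg P\le m_0-1\ll n$ more finely, for instance by bounding the weights by $e^{O(m_0)}$ and using $m_0\le d/(25k)$ rather than $m_0\le n/2$. The latter is harmless when $n\gg d$, and when $n$ is comparable to $d$ the exact constant $25$ gives the needed room. The case $d<25k$ is trivial because the bound is $0$.
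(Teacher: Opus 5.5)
Your framework is the paper's: the same nodes $x_a=1-ak$, the same interpolation weights $\binom{m_0}{a}$, Lemma~\ref{lem:hadamard-coefficient} at each node, and van der Waerden at $x=1$. But the proof is not finished. Step~3 as you set it up gives no contradiction, and two ingredients are missing.

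First, control the interpolation loss with $n\ge d+1$, which holds because $D$ is simple, loopless and $d$-regular. This gives $m_0/n<1/(25k)\le1/50$, so the factor $\sum_a|L_a(1)|<2^{m_0}$ costs only $2^{1/50}$ after taking $n$th roots. Using $m_0\le n/2$ instead throws away all the room.

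Second, your padding inflates the block size. With $S^{\oplus a}\oplus I_{a-1}$, $q_a=ak+a-1$ and $\rho=1$, Lemma~\ref{lem:hadamard-coefficient} gives only $\sqrt{e(ak+a-1)d}$. At $a=m_0\approx d/(25k)$ this is about $\sqrt{e(k+1)/(25k)}\,d$ for large $d$. For small $k$ this exceeds $d/e$ even after the first fix; for $k=2$ it is about $0.404\,d$.

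The paper instead uses $R^{(a)}_h=S_h\oplus P_h^{\oplus(a-1)}$. This block has size exactly $ak$, the same traces $ak-1$ and $-1$, and rows of norm at most $1$, so $|P(x_a)|^{1/n}<\sqrt{eakd}\le\sqrt{e/25}\,d$. Alternatively, keep your blocks and rerun the Hadamard step row by row, using that the $S$-rows have squared norm $(k-1)/k$. The squared row norms in a block row then sum to $ak-1$, and choosing $s^2=1/(d(ak-1))$ gives the same bound. Either way you get $d/e<P(1)^{1/n}<2^{1/50}\sqrt{e/25}\,d\approx0.334\,d$, which is the contradiction.

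The remedies you suggest would not help.
\begin{itemize}
\item Shifting the nodes to $a\in[A,A+m_0)$ makes $|L_a(1)|=\prod_{b\ne a}b/|a-b|$ larger, since every numerator grows ($b$ now ranges over the shifted set). It also raises the Hadamard bounds, which grow with $a$. Moving nodes away from $1$ makes the extrapolation worse, not better.
\item A scalar summand $\beta I_b$ with $\beta\ne1$ contributes $b\beta^{\ell}$ to the trace of a cycle of length $\ell$. The top coefficient would then depend on cycle lengths, not only on the number of zero-sum cycles, so it would no longer be $P(x)$ at a single node.
\end{itemize}
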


\begin{proof}
Write $V(D)=[n]$, let $A$ be the adjacency matrix of $D$, and put
\[
  r:=\left\lfloor\frac{d}{25k}\right\rfloor.
\]
Lemma~\ref{lem:lower} gives $\per A>0$, so $D$ has a directed cycle
cover.
For a directed cycle cover $\mathcal C$, let $N_0(\mathcal C)$ be its
number of zero-sum cycles, and define
\[
  p(x):=\sum_{\mathcal C}x^{N_0(\mathcal C)},
\]
where the sum ranges over all directed cycle covers of $D$.  At
$x=1$, every cover has weight $1$, so
\[
  p(1)=\per A
  \qquad\text{and}\qquad
  p(1)^{1/n}>\frac de
\]
by Lemma~\ref{lem:lower}.

If $r=0$, any cycle cover proves the claim.  Suppose that $r\ge1$ and,
for a contradiction, that every directed cycle cover contains fewer
than $r$ zero-sum cycles.  Then $\deg p<r$.

To obtain a contradiction, we need an upper bound for $p(1)$.  Since
$\deg p<r$, we can express $p(1)$ in terms of the values $p(1-jk)$,
$1\le j\le r$, by interpolation.  We will then realize each
$p(1-jk)$ as the coefficient of $z_1\cdots z_n$ in a determinant
polynomial constructed from $jk\times jk$ matrices.  The coefficient
bound in Lemma~\ref{lem:hadamard-coefficient}, which follows from
Hadamard's inequality, will give the required estimates.

Put $f_0(a):=p(1-ak)$ and recursively define
$f_{i+1}(a):=f_i(a+1)-f_i(a)$.  This operation lowers the degree of a
nonconstant polynomial by one, so $f_r=0$ because $\deg f_0<r$.
Induction on $i$, using Pascal's identity, gives
$f_i(a)=\sum_{j=0}^i(-1)^{i-j}\binom ij f_0(a+j)$.  Taking $i=r$ and
$a=0$ therefore gives
\[
  0=
  \sum_{j=0}^r(-1)^{r-j}\binom rj p(1-jk).
\]
Isolating the term with $j=0$ yields
\begin{equation}\label{eq:cover-finite-difference}
  p(1)
  =
  \sum_{j=1}^r(-1)^{j-1}\binom rj p(1-jk).
\end{equation}

We already have a lower bound for $p(1)$, so it remains to obtain upper
bounds for the values $p(1-jk)$.  Fix $1\le j\le r$.  In $p(1-jk)$,
a zero-sum cycle has weight $1-jk$, while every other cycle has weight
$1$.  Lemma~\ref{lem:cycle-product} suggests assigning matrices
$B_{uv}$ to the edges so that every directed cycle
$C=v_1\cdots v_\ell v_1$ satisfies
\[
  -\tr(B_{v_1v_2}\cdots B_{v_\ell v_1})
  =
  \begin{cases}
    1-jk,&w(C)=1_\Gamma,\\
    1,&w(C)\ne1_\Gamma.
  \end{cases}
\]
Recall that
\[
  -\tr S_h=
  \begin{cases}
    1-k,&h=1_\Gamma,\\
    1,&h\ne1_\Gamma,
  \end{cases}
\]
whereas
\[
  \tr P_h=
  \begin{cases}
    k,&h=1_\Gamma,\\
    0,&h\ne1_\Gamma.
  \end{cases}
\]
To obtain the weight $1-jk$ instead of $1-k$, define the $jk\times jk$
block-diagonal matrix
\[
  R_h^{(j)}:=\diag(S_h,P_h,\ldots,P_h),
\]
with one copy of $S_h$ and $j-1$ copies of $P_h$.  Direct sums preserve
multiplication and add traces, so
\[
  R_g^{(j)}R_h^{(j)}=R_{gh}^{(j)},
  \qquad
  -\tr R_h^{(j)}
  =
  \begin{cases}
    1-jk,&h=1_\Gamma,\\
    1,&h\ne1_\Gamma.
  \end{cases}
\]
Every row of $R_h^{(j)}$ has Euclidean norm at most $1$, because rows
of $P_h$ have norm $1$ and rows of $S_h$ have norm
$\sqrt{(k-1)/k}$.

Let $B^{(j)}$ be the block matrix whose $uv$-block is
$R_{w(uv)}^{(j)}$ when $uv\in E(D)$ and zero otherwise.  Put
\[
  F_j(\mathbf z):=\det(I_{njk}-Z_{jk}B^{(j)}).
\]
For every directed cycle $C$, multiplicativity gives
$B^{(j)}(C)=R_{w(C)}^{(j)}$.  Lemma~\ref{lem:cycle-product}
therefore gives, in the squarefree algebra,
\[
  F_j(\mathbf z)
  =\prod_C\left(1-\tr R_{w(C)}^{(j)}z_{V(C)}\right).
\]
Equality in the squarefree algebra means that the two sides have the
same squarefree coefficients.  The coefficient of
$z_1\cdots z_n$ retains exactly the collections of vertex-disjoint
cycles covering every vertex, namely the directed cycle covers.
Thus, when $F_j$ is viewed as an ordinary polynomial,
\[
\begin{aligned}
  [z_1\cdots z_n]F_j
  &=
  \sum_{\mathcal C}
  \prod_{C\in\mathcal C}
  \left(-\tr R_{w(C)}^{(j)}\right)\\
  &=
  \sum_{\mathcal C}(1-jk)^{N_0(\mathcal C)}
  =p(1-jk).
\end{aligned}
\]
The first sum is over all directed cycle covers $\mathcal C$.  Its
product is $(1-jk)^{N_0(\mathcal C)}$ because each zero-sum cycle
contributes $1-jk$ and every other cycle contributes $1$.

Lemma~\ref{lem:hadamard-coefficient}, with block size $jk$ and
row-norm bound $1$, now gives
\begin{equation}\label{eq:cover-evaluation-bound}
  |p(1-jk)|^{1/n}
  =\left|[z_1\cdots z_n]F_j\right|^{1/n}
  <\sqrt{ejkd}.
\end{equation}

Taking absolute values in~\eqref{eq:cover-finite-difference},
using~\eqref{eq:cover-evaluation-bound}, and recalling that $j\le r$, we
obtain
\[
\begin{aligned}
  p(1)
  &\le
  \sum_{j=1}^r\binom rj|p(1-jk)|\\
  &<
  \sum_{j=1}^r\binom rj
  \left(\sqrt{erkd}\right)^n\\
  &<
  2^r\left(\sqrt{erkd}\right)^n.
\end{aligned}
\]
Taking $n$th roots and comparing this with the lower bound for $p(1)$
gives
\[
  \frac de
  <p(1)^{1/n}
  <2^{r/n}\sqrt{erkd}.
\]
Recall that $r=\lfloor d/(25k)\rfloor$.  Since $D$ is simple and
loopless, $d<n$.  Thus $r/n<1/(25k)\le1/50$ and $rkd\le d^2/25$, so
\[
  p(1)^{1/n}
  <2^{1/50}\sqrt{\frac e{25}}\,d
  <\frac de.
\]
This contradiction proves that some cycle cover contains at least
$r=\lfloor d/(25k)\rfloor$ zero-sum cycles.
The zero-sum cycles in the cover are pairwise vertex-disjoint.
\end{proof}

\begin{theorem}[Edge-disjoint zero-sum cycles]\label{thm:packing}
Let $\Gamma$ be a finite group of order $k\ge2$.  Let $D$ be a
$d$-regular digraph with a $\Gamma$-labelling.  If $d\ge50k$, then $D$
contains at least
\[
  \frac{d^2}{100k}
\]
pairwise edge-disjoint zero-sum directed cycles.
\end{theorem}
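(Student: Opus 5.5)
We iterate Theorem~\ref{thm:vertex-packing}, deleting a whole cycle cover each time. The key observation is that a directed cycle cover $\mathcal C$ of a $d$-regular digraph $D$ uses exactly one outgoing and one incoming edge at every vertex. Deleting the edges of $\mathcal C$ therefore leaves a $(d-1)$-regular digraph, which is still simple and loopless. The edge labels on the surviving edges are unchanged, so it is again a $\Gamma$-labelled regular digraph.

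Set $D_0:=D$. For $i=0,1,\ldots,t-1$, apply Theorem~\ref{thm:vertex-packing} to the $(d-i)$-regular digraph $D_i$. This gives a directed cycle cover $\mathcal C_i$ of $D_i$ containing at least $\lfloor (d-i)/(25k)\rfloor$ zero-sum cycles. Put $D_{i+1}:=D_i-E(\mathcal C_i)$. The covers $\mathcal C_0,\ldots,\mathcal C_{t-1}$ are pairwise edge-disjoint by construction, and cycles within one cover are vertex-disjoint, hence edge-disjoint. So the union of the zero-sum cycles selected in all rounds is a family of pairwise edge-disjoint zero-sum cycles. Its size is at least
\[
  \sum_{i=0}^{t-1}\left\lfloor\frac{d-i}{25k}\right\rfloor .
\]

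It remains to choose $t$ and estimate this sum. The only real issue is the floors. I would take $t=\lceil d/2\rceil$, so that $d-i\ge d/2\ge 25k$ for every $i<t$; here the hypothesis $d\ge50k$ is used. Using $\lfloor x\rfloor\ge x/2$ for $x\ge1$, each term is at least $(d-i)/(50k)\ge d/(100k)$. Summing over the $t\ge d/2$ rounds gives only $d^2/(200k)$, which misses the target by a factor $2$. So I will use the floors more carefully. Write $\lfloor x\rfloor> x-1$, and run all $d$ rounds, continuing with $D_i$ while $d-i\ge 1$; terms with $d-i<25k$ simply contribute $0$. Since every term is nonnegative, the total is at least
\[
  \sum_{m=25k}^{d}\left(\frac{m}{25k}-1\right)
  \;\ge\;\frac{1}{25k}\sum_{m=25k}^{d}(m-25k).
\]
The right-hand sum is $\sum_{s=0}^{d-25k}s\approx (d-25k)^2/(50k)$. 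For $d\ge50k$ we have $d-25k\ge d/2$, so this is at least about $d^2/(200k)$ — again a factor $2$ short.

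This constant-chasing is therefore the main obstacle. I would try to tighten it in three steps. First, compute exactly: $\sum_{s=0}^{N}s=N(N+1)/2$ with $N=d-25k$. Second, use the exact floor sum, which is at least $\sum_{m}(m-25k+1)/(25k)$, since $\lfloor m/(25k)\rfloor\ge (m-25k+1)/(25k)$. Third, if the result still falls below $d^2/(100k)$ at the edge case $d=50k$, check that case directly: there the bound must deliver $25k$ cycles, and the sum gives roughly $\sum_{q=1}^{1}25k\cdot q$ plus the terms at $m=50k$, which is about $25k$. If a gap persists, the fallback is to sharpen the constant $25$ in Theorem~\ref{thm:vertex-packing} for the intermediate degrees, by redoing its final inequality with $r=\lfloor m/(cK)\rfloor$ for smaller $c$. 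The slack there is generous, since $2^{1/50}\sqrt{e/25}$ is well below $1/e$.
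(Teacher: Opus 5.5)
Your reduction is exactly the paper's: deleting a whole cycle cover turns the $m$-regular digraph into an $(m-1)$-regular one, you apply Theorem~\ref{thm:vertex-packing} for $m=d,d-1,\ldots,1$, and you collect at least $\sum_{m=1}^{d}\lfloor m/(25k)\rfloor$ pairwise edge-disjoint zero-sum cycles. The gap is that you never prove this sum is at least $d^2/(100k)$, and your proposed repairs cannot close it.

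Put $K:=25k$. Any linear bound such as $\lfloor m/K\rfloor\ge(m-K+1)/K$ loses about a factor $2$ on the first block $K\le m<2K$. There the floor is identically $1$, but the linear bound averages about $1/2$. Such bounds give roughly $(d-K)^2/(2K)$, which exceeds $d^2/(4K)$ only when $d\gtrsim(2+\sqrt2)K\approx85k$. Checking $d=50k$ alone therefore leaves the range $50k<d\lesssim85k$ open.

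Shrinking the constant $25$ does not rescue the linear estimate either. At $d=50k$ it would need the constant below roughly $19$. The final inequality in the proof of Theorem~\ref{thm:vertex-packing} needs it above $e^3\approx20.1$, so the slack there is only about $10\%$, not generous.

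The computation you sketch for $d=50k$ is the right idea; it just has to be done for every $d$. Write $d=qK+\rho$ with $0\le\rho<K$; the hypothesis $d\ge50k=2K$ gives $q\ge2$. The terms with $m<K$ vanish. Each block $jK\le m<(j+1)K$ with $1\le j\le q-1$ contributes $jK$. The final range $qK\le m\le d$ contributes $q(\rho+1)$. Hence
\[
  \sum_{m=1}^{d}\left\lfloor\frac mK\right\rfloor=K\binom q2+q(\rho+1),
\]
and
\[
  4K\left(K\binom q2+q(\rho+1)\right)-(qK+\rho)^2
  =K^2q(q-2)+\rho(2qK-\rho)+4qK>0 .
\]
Dividing by $4K=100k$ gives at least $d^2/(100k)$ cycles for every $d\ge50k$. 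The paper simply asserts this inequality. With this block computation in place, your argument is complete.
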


\begin{proof}
Set $D_d:=D$.  For $m=d,d-1,\ldots,1$, apply
Theorem~\ref{thm:vertex-packing} to the $m$-regular digraph $D_m$.
Choose a directed cycle cover containing at least
$\left\lfloor\frac{m}{25k}\right\rfloor$ zero-sum cycles, retain those
cycles, and delete every edge of the cover.  Since the cover uses
exactly one incoming and one outgoing
edge at each vertex, the remaining digraph $D_{m-1}$ is
$(m-1)$-regular.  Every cycle retained in a later round uses only
edges that remain after the earlier covers have been deleted.
Therefore all the retained zero-sum cycles are pairwise edge-disjoint,
and their number is at least
\[
  \sum_{m=1}^d\left\lfloor\frac{m}{25k}\right\rfloor
  \ge\frac{d^2}{100k}.
\]
\end{proof}

\section{Eulerian digraphs}\label{sec:eulerian}

In the regular proof, $A/d$ is doubly stochastic, so the van der Waerden
theorem gives a lower bound for $\per A$.  For an irregular Eulerian
digraph, however, $\per A$ may be zero.  We therefore add diagonal
weights and instead lower-bound the permanent of
$M_{\mathbf t}=A+\diag(t_1,\ldots,t_n)$.  This matrix need not become
doubly stochastic after scalar normalization, so we use the stronger
Bethe permanent lower bound.

\begin{samepage}
\begin{theorem}\label{thm:eulerian}
Let $\Gamma$ be a finite group of order $k\ge2$.  Let $D$ be an
Eulerian digraph with a $\Gamma$-labelling, and write
\[
  d(v):=d^+(v)=d^-(v),\qquad
  \delta:=\min_v d(v),\qquad
  \Delta:=\max_v d(v).
\]
If $\Delta>0$ and
\[
  \frac{\delta^3}{\Delta^2}\ge e^3(k-1),
\]
then $D$ contains a zero-sum directed cycle.
\end{theorem}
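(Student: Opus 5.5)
The plan is to run the argument of Theorem~\ref{thm:main} on a diagonally weighted matrix. Fix positive weights $t_1,\ldots,t_n$ and put $M_{\mathbf t}:=A+\diag(t_1,\ldots,t_n)$. Expanding the permanent according to which vertices are fixed points gives
\[
  \per M_{\mathbf t}=\sum_{S\subseteq[n]}\Bigl(\prod_{v\notin S}t_v\Bigr)\per A[S].
\]
Now suppose $D$ has no zero-sum cycle, and let $F$ be the polynomial of Lemma~\ref{lem:cycle-product} with $q=k$ and $B_{uv}=S_{w(uv)}$. Then $[z_S]F=\per A[S]$ for every $S$. Consequently $\per M_{\mathbf t}$ is the coefficient of $z_1\cdots z_n$ in the ordinary polynomial
\[
  G(\mathbf z):=F(\mathbf z)\prod_v(1+t_vz_v),
\]
and $\deg_{z_v}G\le k+1$.

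\textbf{Upper bound.} I would redo the proof of Lemma~\ref{lem:hadamard-coefficient} with vertex-dependent radii $s_v$ and with $u_v$ uniform on the $(k+2)$nd roots of unity. Hadamard's inequality bounds $|F|$ row by row as before, and the scalar factor contributes at most $1+t_vs_v$. This gives
\[
  \per M_{\mathbf t}\le\prod_v s_v^{-1}(1+t_vs_v)\Bigl(1+\tfrac{k-1}{k}d(v)s_v^2\Bigr)^{k/2}.
\]
Taking $s_v=((k-1)d(v))^{-1/2}$, the factor at $v$ is less than $\sqrt{e(k-1)d(v)}+\sqrt e\,t_v$. The diagonal weights $t_v$ therefore cost additively on the upper side, and they must be chosen small.

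\textbf{Lower bound.} This is the main obstacle. The naive choice $t_v=\Delta-d(v)$ makes $M_{\mathbf t}$ have all row and column sums $\Delta$, using that $D$ is Eulerian. Lemma~\ref{lem:lower} then gives $\per M_{\mathbf t}>(\Delta/e)^n$, but the upper bound then contains terms of size $\sqrt e\,\Delta$, which is too weak. I would instead take small weights, say $t_v$ proportional to $d(v)$ times a parameter $\lambda<1$, or possibly $t_v=0$ with a perturbation. Since $M_{\mathbf t}$ is then not a scalar multiple of a doubly stochastic matrix, I would invoke the Bethe permanent lower bound (Gurvits, Straszak--Vishnoi). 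In its scaling form it reads: for any doubly stochastic $P$ supported on the support of $M$,
\[
  \per M\ge\prod_{u,v}\Bigl(\frac{M_{uv}}{P_{uv}}\Bigr)^{P_{uv}}\prod_{u,v}(1-P_{uv})^{1-P_{uv}}.
\]
For $P$ I would take off-diagonal entries $1/\Delta$ on edges and diagonal entries $1-d(v)/\Delta$; this is doubly stochastic because $D$ is Eulerian. Plugging in yields a bound of the shape $\prod_v(\text{roughly }\Delta/e)^{d(v)/\Delta}\,t_v^{1-d(v)/\Delta}$, up to entropy corrections.

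\textbf{Optimization.} Finally I would choose the $t_v$, and possibly rescale $P$, so that per vertex the lower-bound factor beats $\sqrt{e(k-1)d(v)}+\sqrt e\,t_v$. The ratio between the degree exponents $d(v)/\Delta$ and the $\sqrt{d(v)}$ upper factors is what should produce the hypothesis $\delta^3/\Delta^2\ge e^3(k-1)$. I expect this bookkeeping to be delicate. I would first check the regular case, where $P=A/d$ and the bound reduces to Theorem~\ref{thm:main}, and then balance the exponents using $\delta\le d(v)\le\Delta$ to reach the stated constant $e^3$.
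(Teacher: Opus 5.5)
Your framework matches the paper's. You use the weighted matrix $M_{\mathbf t}$ and extract $\per M_{\mathbf t}$ by averaging over roots of unity; your $G=F\prod_v(1+t_vz_v)$ with $(k+2)$nd roots is equivalent to the paper's weight $t_v+s^{-1}u_v^{-1}$ with $(k+1)$st roots. You then apply Hadamard row by row and use the Bethe bound with exactly the paper's $Q$. The gap is that even with the best choice of $s_v$ and $t_v$, the upper bound you wrote cannot give $\delta^3/\Delta^2\ge e^3(k-1)$ against that $Q$. You bound $|1+t_vs_vu_v|\le 1+t_vs_v$ pointwise, so vertex $v$ costs $s_v^{-1}+t_v$. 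The paper instead keeps the scalar factor $t_v+s^{-1}u_v^{-1}$ inside the expectation and bounds $|F|$ pointwise by Hadamard. It then factors the rest by independence and uses $\mathbb E\,u_v^{-1}=0$ with Cauchy--Schwarz to get $\mathbb E|t_v+s^{-1}u_v^{-1}|\le\sqrt{t_v^2+s^{-2}}$. Write $p=p_v=d(v)/\Delta$; the Bethe factor at $v$ is $(\Delta/e)^pp^p\bigl(t_v/(1-p)\bigr)^{1-p}$. Optimizing over $t$ gives $\sup_t t^{1-p}/\sqrt{t^2+s^{-2}}=s^p(1-p)^{(1-p)/2}p^{p/2}$, but only $\sup_t t^{1-p}/(t+s^{-1})=s^p(1-p)^{1-p}p^p$. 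No choice of $s_v$ recovers the lost factor $p^{p/2}(1-p)^{(1-p)/2}$. For large $k$ and $d(v)^3=e^3(k-1)\Delta^2$, the best per-vertex ratio your bound allows is about $p_v^{p_v/2}<1$. Thus your bound gives essentially only $\delta^4/\Delta^3\ge e^3(k-1)$. It yields no contradiction when almost all vertices have degree $\delta<\Delta$.

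Your radius $s_v=((k-1)d(v))^{-1/2}$ is also the wrong scale. It makes the Hadamard factor $(1+1/k)^{k/2}$, close to $e^{1/2}$, at every vertex, whereas the Bethe gain at $v$ carries only the exponent $p_v$. With the factor $\sqrt{e(k-1)d(v)}+\sqrt e\,t_v$ that you display, the optimal $t_v$ gives the ratio $e^{-1/2}\bigl(d(v)^3/(e^2\Delta^2(k-1))\bigr)^{p_v/2}$. This exceeds $1$ only when $d(v)^3/(\Delta^2(k-1))\ge e^{2+\Delta/d(v)}$. The paper uses the uniform radius $s^{-2}=(k-1)\Delta$, so that $\frac{k-1}{k}d(v)s^2=p_v/k$ and the Hadamard factor is $(1+p_v/k)^{k/2}<e^{p_v/2}$. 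It takes $t_v=s^{-1}\sqrt{(1-p_v)/p_v}$, which vanishes when $p_v=1$; positivity is needed only where $Q_{vv}>0$. With the Cauchy--Schwarz factor, the ratio at $v$ becomes $e^{-p}\bigl(d(v)^3/(\Delta^2(k-1))\bigr)^{p/2}(1-p)^{-(1-p)/2}(1+p/k)^{-k/2}$. This exceeds $1$ under the stated hypothesis, and this is where the condition $\delta^3/\Delta^2\ge e^3(k-1)$ comes from.
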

\end{samepage}

\subsection{Weighted cycle collections}

Let $V(D)=[n]$, let $A$ be the adjacency matrix of $D$, and choose
nonnegative weights $\mathbf t=(t_1,\ldots,t_n)$.  Put
\[
M_{\mathbf t}:=A+\diag(t_1,\ldots,t_n).
\]
Expanding the permanent according to the diagonal entries $t_v$ that
are chosen gives
\begin{equation}\label{eq:weighted-permanent-expansion}
  \per M_{\mathbf t}
  =
  \sum_{S\subseteq[n]}
  \left(\prod_{v\notin S}t_v\right)\per A[S].
\end{equation}
Suppose that $D$ has no zero-sum directed cycle, and let $F$ be the
determinant polynomial obtained in Lemma~\ref{lem:cycle-product} by
assigning $S_{w(uv)}$ to every edge $uv$.  That lemma gives
$[z_S]F=\per A[S]$ for every $S$.  Hence
\begin{equation}\label{eq:weighted-permanent-determinant}
  \per M_{\mathbf t}
  =
  \sum_{S\subseteq[n]}
  \left(\prod_{v\notin S}t_v\right)[z_S]F.
\end{equation}
We will lower-bound $\per M_{\mathbf t}$ directly and obtain an upper
bound from determinant evaluations.

\subsection{A permanent upper bound}

\begin{lemma}
\label{lem:weighted-bound}
Let $\Gamma$ be a finite group of order $k\ge2$, and let $D$ be a
digraph with vertex set $[n]$ and a $\Gamma$-labelling.  Suppose that
$D$ has no zero-sum directed cycle.  Let $A$ be the adjacency matrix
of $D$, choose nonnegative weights
$\mathbf t=(t_1,\ldots,t_n)$, and put
\[
  M_{\mathbf t}:=A+\diag(t_1,\ldots,t_n).
\]
Then, for every $s>0$,
\[
  \per M_{\mathbf t}
  \le
  \prod_{v=1}^n
  \sqrt{t_v^2+s^{-2}}
  \left(1+\frac{k-1}{k}d_D^+(v)s^2\right)^{k/2}.
\]
\end{lemma}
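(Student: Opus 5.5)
The plan is to write the right-hand side of \eqref{eq:weighted-permanent-determinant} as a single coefficient extraction from a modified determinant, and then bound it by Hadamard's inequality, as in Lemma~\ref{lem:hadamard-coefficient}. By \eqref{eq:weighted-permanent-determinant},
\[
  \per M_{\mathbf t}=\sum_{S}\Bigl(\prod_{v\notin S}t_v\Bigr)[z_S]F .
\]
Since $F$ has degree at most $k$ in each $z_v$, the squarefree coefficients are unaffected by reduction to $\mathcal A$. The key observation is that this weighted sum equals the coefficient of $z_1\cdots z_n$ in
\[
  G(\mathbf z):=\Bigl(\prod_{v=1}^n (t_v+z_v)\Bigr)F(\mathbf z)
\]
taken in $\mathcal A$. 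This fails for ordinary polynomials, because terms $z_v^2$ from $F$ would be multiplied by $t_v$ and then contribute. So I will instead extract the multilinear part directly, using the roots-of-unity averaging from Lemma~\ref{lem:hadamard-coefficient}.

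Concretely, let $u_v$ be independent and uniform on the $(k+1)$st roots of unity, and fix $s>0$. For a polynomial $g$ of degree at most $k$ in $z_v$, the average
\[
  s^{-1}\mathbb E[u_v^{-1}g(su_v)]
\]
returns the linear coefficient, and $\mathbb E[g(su_v)]$ returns the constant coefficient. Hence
\[
  \mathbb E\bigl[(t_v+s^{-1}u_v^{-1})\,g(su_v)\bigr]=t_v\,[z_v^0]g+[z_v^1]g .
\]
Applying this successively in each variable gives
\[
  \per M_{\mathbf t}
  =\mathbb E\Bigl[\prod_{v}(t_v+s^{-1}u_v^{-1})\,F(su_1,\ldots,su_n)\Bigr].
\]

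Next I bound the integrand pointwise. The scalar factor satisfies $|t_v+s^{-1}u_v^{-1}|\le t_v+s^{-1}$. This is weaker than the claimed $\sqrt{t_v^2+s^{-2}}$, so that factor needs more care; see below. For the determinant, each row $(v,a)$ of $I_{nk}-sZ_kB$ has squared norm at most
\[
  1+\tfrac{k-1}{k}d^+(v)s^2,
\]
by Lemma~\ref{lem:centered} and the disjoint-coordinates argument already used in Lemma~\ref{lem:hadamard-coefficient}. There are $k$ such rows for each $v$, so Hadamard's inequality bounds $|F|$ by $\prod_v(1+\frac{k-1}{k}d^+(v)s^2)^{k/2}$.

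The main obstacle is getting $\sqrt{t_v^2+s^{-2}}$ rather than $t_v+s^{-1}$. I would try to absorb the factor into the determinant, which requires the $t_v$'s and $1$'s to sit in orthogonal directions. Take $\varepsilon$ real, and put $\varepsilon$ in place of the diagonal entry in one extra row per vertex. Equivalently, let $u_v$ range over the $(k+2)$nd roots of unity, and replace the scalar multiplier by the average over a phase $\theta_v$ of
\[
  (t_v+s^{-1}u_v^{-1}e^{i\theta_v})\cdot(\text{the analogous expression}).
\]
Since $F$ has degree at most $k<k+1$ in $z_v$, there is room for a second independent root-of-unity variable, chosen so that the $t_v$ contribution and the $s^{-1}$ contribution are never selected together. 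One then applies Cauchy--Schwarz/Hadamard to the enlarged $(k+1)$-row block for $v$: adjoin a row $(t_v, s^{-1})$-type vector, whose norm is $\sqrt{t_v^2+s^{-2}}$, orthogonal to the other rows. Concretely, I would realize $\per M_{\mathbf t}$ as a coefficient of the determinant of the $n(k+1)$-square bordered matrix in which vertex $v$ gets one extra row and column with entries $t_v$ and $s^{-1}$ placed in distinct coordinates. Expanding along these extra rows reproduces the $t_v[z^0]+[z^1]$ selection. Hadamard then yields exactly
\[
  \prod_v\sqrt{t_v^2+s^{-2}}\,\bigl(1+\tfrac{k-1}{k}d^+(v)s^2\bigr)^{k/2}.
\]
Verifying that this bordered determinant has the right coefficient is the step I expect to require the most checking.
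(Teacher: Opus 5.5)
You have the paper's ingredients right: the extraction identity $\per M_{\mathbf t}=\mathbb E\bigl[\prod_v(t_v+s^{-1}u_v^{-1})F(su_1,\ldots,su_n)\bigr]$, and the pointwise Hadamard bound
\[
|F(su_1,\ldots,su_n)|\le H:=\prod_v\Bigl(1+\tfrac{k-1}{k}d^+(v)s^2\Bigr)^{k/2}.
\]
The gap is the final step: you never obtain the factor $\sqrt{t_v^2+s^{-2}}$. The bordered-determinant construction is not specified, and its coefficient identity is left unchecked.

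In its natural reading the construction also cannot work. Suppose one determinant reproduces the integrand $\prod_v(t_v+s^{-1}u_v^{-1})F(su)$ and is bounded pointwise by Hadamard. Take an edgeless digraph, so $F\equiv1$ and $d^+(v)=0$, and set $u_v=1$. The integrand then has modulus $\prod_v(t_v+s^{-1})$, which exceeds the target $\prod_v\sqrt{t_v^2+s^{-2}}$ whenever some $t_v>0$. So no pointwise bound on the integrand can give the lemma; you must use cancellation in the average.

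A smaller slip: your ``key observation'' is wrong. The coefficient $[z_1\cdots z_n]\prod_v(t_v+z_v)F$ puts the weights $t_v$ on the covered vertices. The correct multiplier is $\prod_v(1+t_vz_v)$, which works even for ordinary polynomials.

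The missing idea is simply not to bound the scalar factor pointwise. Bound only $|F|$ pointwise by $H$, then use independence and Cauchy--Schwarz:
\[
\per M_{\mathbf t}\le H\,\mathbb E\prod_v\bigl|t_v+s^{-1}u_v^{-1}\bigr|
=H\prod_v\mathbb E\bigl|t_v+s^{-1}u_v^{-1}\bigr|
\le H\prod_v\Bigl(\mathbb E\bigl|t_v+s^{-1}u_v^{-1}\bigr|^2\Bigr)^{1/2}.
\]
Since $t_v$ is real, $|u_v|=1$, and $\mathbb E u_v^{-1}=0$, you get
\[
\mathbb E|t_v+s^{-1}u_v^{-1}|^2=t_v^2+s^{-2}+2t_vs^{-1}\operatorname{Re}\mathbb E u_v^{-1}=t_v^2+s^{-2}.
\]
This is exactly the paper's argument, and it needs no enlarged matrix.
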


\begin{proof}
Let $B$ be the block matrix obtained by assigning $S_{w(uv)}$ to every
edge $uv$, and let $F$ be the corresponding determinant polynomial
from Lemma~\ref{lem:cycle-product}.  By~\eqref{eq:weighted-permanent-determinant},
\[
  \per M_{\mathbf t}
  =
  \sum_{S\subseteq[n]}
  \left(\prod_{v\notin S}t_v\right)[z_S]F.
\]
Each variable $z_v$ occurs only in the $k$ rows of the corresponding
block row of the determinant, so $F$ has degree at most $k$ in each
variable.

Let $u_1,\ldots,u_n$ be independent and uniform on the $(k+1)$st
roots of unity.  As in the proof of
Lemma~\ref{lem:hadamard-coefficient}, for $0\le a\le k$ we have
\[
  \mathbb E\left[(t_v+s^{-1}u_v^{-1})(su_v)^a\right]
  =
  \begin{cases}
    t_v,&a=0,\\
    1,&a=1,\\
    0,&2\le a\le k.
\end{cases}
\]
Indeed, after expansion the two powers of $u_v$ are $a$ and $a-1$,
and averaging over the roots of unity kills every exponent in the
range $-1,\ldots,k$ except $0$.  Applying this identity independently
to the preceding coefficient sum gives
\begin{equation}\label{eq:weighted-extraction}
  \per M_{\mathbf t}
  =\mathbb E\left[
    \prod_{v=1}^n(t_v+s^{-1}u_v^{-1})
    F(su_1,\ldots,su_n)
  \right].
\end{equation}

Fix a choice of $u_1,\ldots,u_n$.  At these values, the polynomial is
being evaluated as the determinant
\[
  F(su_1,\ldots,su_n)
  =
  \det\left(
    I_{nk}-\diag(su_1I_k,\ldots,su_nI_k)B
  \right).
\]
Consider a scalar row belonging to the block row indexed by $v$.  The
diagonal block $I_k$ contributes one entry of squared modulus $1$.
For every edge $vw$, the same scalar row contains one row of
$-su_vS_{w(vw)}$, which has squared norm $s^2(k-1)/k$.
Because $D$ is loopless, the identity block and the edge blocks lie in
different block columns.  Because $D$ is simple, the outgoing edges
from $v$ also give distinct block columns.  Hence the scalar row has
squared norm at most
\[
  1+\frac{k-1}{k}d_D^+(v)s^2.
\]
There are $k$ such rows for each vertex.  Hadamard's inequality
therefore gives the following bound for every choice of
$u_1,\ldots,u_n$:
\[
  |F(su_1,\ldots,su_n)|
  \le
  \prod_{v=1}^n
  \left(1+\frac{k-1}{k}d_D^+(v)s^2\right)^{k/2}.
\]

Substituting this pointwise bound into~\eqref{eq:weighted-extraction},
taking absolute values, and using
independence, we obtain
\[
  \per M_{\mathbf t}
  \le
  \prod_{v=1}^n
  \left(1+\frac{k-1}{k}d_D^+(v)s^2\right)^{k/2}
  \prod_{v=1}^n\mathbb E|t_v+s^{-1}u_v^{-1}|.
\]
Since $\mathbb E u_v=\mathbb E u_v^{-1}=0$, the Cauchy--Schwarz
inequality gives
\[
  \mathbb E|t_v+s^{-1}u_v^{-1}|
  \le
  \sqrt{\mathbb E|t_v+s^{-1}u_v^{-1}|^2}
  =\sqrt{t_v^2+s^{-2}}.
\]
Multiplying these bounds proves the lemma.
\end{proof}

\begin{remark}
The estimate in Lemma~\ref{lem:weighted-bound} follows the same pattern
as Lossers' solution~\cite{Lossers} of a problem of Goldstein and
Graham~\cite{GoldsteinGraham}: average determinant evaluations over
roots of unity to select the desired coefficients, and then apply
Hadamard's inequality.
\end{remark}

\subsection{Existence}

For the lower bound we use the following form of an inequality proved
by Gurvits using Schrijver's permanental inequality and the Bethe
approximation~\cite{GurvitsBethe}.

\begin{lemma}[Bethe permanent inequality]\label{lem:bethe}
Let $P$ be a nonnegative $n\times n$ matrix, and let $Q$ be a doubly
stochastic matrix such that $Q_{ij}>0$ only if $P_{ij}>0$.  Then
\[
  \per P
  \ge
  \prod_{\substack{i,j\\Q_{ij}>0}}
  \left(\frac{P_{ij}}{Q_{ij}}\right)^{Q_{ij}}
  (1-Q_{ij})^{1-Q_{ij}},
\]
where $0^0$ in the second factor is interpreted as $1$.
\end{lemma}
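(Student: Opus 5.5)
This inequality is due to Gurvits~\cite{GurvitsBethe}, and I would reduce it to Schrijver's permanental inequality: for every doubly stochastic $Q$,
\[
  \per Q\ge\prod_{i,j}(1-Q_{ij})^{1-Q_{ij}}.
\]
The plan is a reduction to the support of $Q$, a scale-invariant variational problem, and a first-order analysis that brings Schrijver's bound to bear at the optimum.

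\textbf{Step 1 (reduction to the support of $Q$).} First I would restrict to the support of $Q$. The permanent is monotone in nonnegative entries, so replacing every $P_{ij}$ with $Q_{ij}=0$ by zero can only decrease $\per P$. The right-hand side involves only pairs with $Q_{ij}>0$, so it is unchanged. After a perturbation and limiting argument we may therefore assume that $P_{ij}>0$ exactly on $\operatorname{supp}Q$. Then put
\[
  \phi(P):=\log\per P-\sum_{Q_{ij}>0}Q_{ij}\log P_{ij},
\]
so the claim becomes $\phi(P)\ge\sum_{ij}(1-Q_{ij})\log(1-Q_{ij})$ for every such $P$.

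\textbf{Step 2 (scale invariance).} Because $Q$ has all row and column sums equal to $1$, $\phi$ is invariant under diagonal scalings $P\mapsto XPY$. It is also homogeneous of degree $0$ in each row and column separately. Next I would minimize $\phi$ over matrices supported on $\operatorname{supp}Q$, which is a cone modulo scaling. The infimum is finite because $\per P\ge\prod_iP_{i\sigma(i)}$ for any permutation $\sigma$ in the support, and such a $\sigma$ exists by Birkhoff's theorem.

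\textbf{Step 3 (stationarity).} At a minimizer the stationarity condition reads
\[
  \frac{P_{ij}\,\per P(i|j)}{\per P}=Q_{ij},
\]
where $P(i|j)$ denotes the minor obtained by deleting row $i$ and column $j$. In other words, $Q$ is exactly the matrix of edge marginals of the Gibbs measure $\mu(\sigma)\propto\prod_iP_{i\sigma(i)}$ on permutations. If the infimum is not attained, it is approached along scalings, and one works with a limit point on a face of the support.

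\textbf{Step 4 (Schrijver at the optimum).} At the optimum I would use the Gibbs variational formula
\[
  \log\per P=\mathbb E_\mu\sum_i\log P_{i\sigma(i)}+H(\mu),
  \qquad
  \mathbb E_\mu\sum_i\log P_{i\sigma(i)}=\sum_{ij}Q_{ij}\log P_{ij}.
\]
This gives $\phi(P)=H(\mu)$, the entropy of a measure on permutations whose marginal matrix is $Q$. It then remains to show that such a Gibbs measure satisfies the Bethe entropy bound
\[
  H(\mu)\ge\sum_{ij}\bigl(-Q_{ij}\log Q_{ij}+(1-Q_{ij})\log(1-Q_{ij})\bigr).
\]
Together with $\sum_{ij}Q_{ij}\log Q_{ij}$, this yields the lemma. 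The bound should follow from Schrijver's inequality applied to the doubly stochastic matrix $Q$, after rewriting $H(\mu)$ through the factorization $\per P=\prod_{ij}P_{ij}^{Q_{ij}}\cdot e^{H(\mu)}$ and comparing with $\per Q$.

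The main obstacle is this last step: identifying the entropy of the Gibbs measure with marginals $Q$ and bounding it below via Schrijver's inequality. This is precisely the content of Gurvits's Bethe-approximation argument. I would first attempt it through his characterization of the Bethe free energy as a lower bound for $\log\per$, and otherwise quote~\cite{GurvitsBethe} for it directly, as the paper does.
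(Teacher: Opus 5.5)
\textbf{There is a genuine gap: your argument reduces the lemma to a statement equivalent to itself, and the proposed way to close it fails.} The paper gives no proof of this lemma; it simply quotes Gurvits. Your fallback of citing him therefore matches the paper, but the self-contained argument does not work.

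\textbf{Steps 1--3 only restate the problem.} For fixed $Q$, $\phi$ is convex in $x_{ij}=\log P_{ij}$. By convex duality, $\inf_P\phi(P)$ is the largest entropy of a probability measure on $S_n$ with marginal matrix $Q$, and your Gibbs measure $\mu$ at the minimizer is exactly that maximum-entropy measure. Conversely, every $P$ minimizes the $\phi$ built from the marginal matrix of its own Gibbs measure. Hence the Step~4 claim $H(\mu)\ge\sum_{ij}\bigl(-Q_{ij}\log Q_{ij}+(1-Q_{ij})\log(1-Q_{ij})\bigr)$ is equivalent to the lemma itself. (Step~1 also misstates the target as $\phi(P)\ge\sum_{ij}(1-Q_{ij})\log(1-Q_{ij})$, dropping the term $-\sum_{ij}Q_{ij}\log Q_{ij}$.)

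\textbf{Your way of closing Step~4 cannot work, for two reasons.}
\begin{itemize}
\item The inequality $\per Q\ge\prod_{ij}(1-Q_{ij})^{1-Q_{ij}}$ is not Schrijver's inequality. Schrijver's inequality reads $\per\bigl(Q\circ(1-Q)\bigr)\ge\prod_{ij}(1-Q_{ij})$, with $\circ$ the entrywise product. What you quote is the special case $P=Q$ of the lemma you are proving. You cannot pass from $P=Q$ to general $P$ by minimizing $\phi$. The minimizer is the $P$ whose Gibbs marginals equal $Q$, and a doubly stochastic matrix is generally not the marginal matrix of its own Gibbs measure; this already fails for $\begin{pmatrix}a&1-a\\1-a&a\end{pmatrix}$ with $a\notin\{0,\tfrac12,1\}$.
\item Comparing $H(\mu)$ with $\per Q$ through the Gibbs variational principle gives $\log\per Q\ge\sum_{ij}Q_{ij}\log Q_{ij}+H(\mu)$. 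This is an \emph{upper} bound on $H(\mu)$, so no lower bound on $\per Q$ can turn it into a lower bound.
\end{itemize}

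\textbf{The missing idea (Gurvits's route) is to optimize over $Q$ with $P$ fixed, not over $P$ with $Q$ fixed.}
\begin{itemize}
\item Let $Q^*$ maximize $G_P(Q):=\sum_{ij}Q_{ij}\log(P_{ij}/Q_{ij})+\sum_{ij}(1-Q_{ij})\log(1-Q_{ij})$ over doubly stochastic $Q$ supported in the support of $P$.
\item The Lagrange conditions give $\log P_{ij}-\log\bigl(Q^*_{ij}(1-Q^*_{ij})\bigr)=\alpha_i+\beta_j$. So $Q^*\circ(1-Q^*)$ is a diagonal scaling of $P$, and $\per P=e^{\sum_i\alpha_i+\sum_j\beta_j}\,\per\bigl(Q^*\circ(1-Q^*)\bigr)$.
\item Substituting into $G_P$ and using the row and column sums of $Q^*$ gives $G_P(Q^*)=\sum_{ij}\log(1-Q^*_{ij})+\sum_i\alpha_i+\sum_j\beta_j$.
\item Schrijver's actual inequality applied to $Q^*$ then yields $\log\per P\ge G_P(Q^*)\ge G_P(Q)$ for every admissible $Q$, which is the lemma. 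Maximizers with entries equal to $0$ or $1$, where the scaling relation degenerates, must be handled separately.
\end{itemize}
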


\begin{proof}[Proof of Theorem~\ref{thm:eulerian}]
Suppose that $D$ has no zero-sum directed cycle.  Choose nonnegative
weights $t_1,\ldots,t_n$, to be specified later, with $t_v>0$ whenever
$d(v)<\Delta$.  We compare a lower bound for $\per M_{\mathbf t}$ with
the determinant upper bound supplied by Lemma~\ref{lem:weighted-bound}.

\medskip\noindent
\emph{The permanent lower bound.}
To apply Lemma~\ref{lem:bethe} with $P=M_{\mathbf t}$, we need a
doubly stochastic matrix $Q$ whose positive entries occur only where
$M_{\mathbf t}$ is positive.  Put
\[
  p_v:=\frac{d(v)}\Delta.
\]
Since $\delta^3/\Delta^2\le\delta$ and $e^3(k-1)>1$, the hypothesis
implies $\delta>1$.  In particular, $0<p_v\le1$ and $\Delta\ge2$.
Define the matrix $Q$ by
\[
  Q_{uv}:=
  \begin{cases}
    1/\Delta,&uv\in E(D),\\
    1-p_v,&u=v,\\
    0,&\text{otherwise}.
  \end{cases}
\]
Since $D$ is Eulerian, the row and column sums of $Q$ at $v$ are both
$p_v+1-p_v=1$.
Thus $Q$ is doubly stochastic.
Its positive off-diagonal entries correspond to edges of $D$, where
$M_{\mathbf t}$ has entry $1$.  If $Q_{vv}>0$, then $p_v<1$, so
$d(v)<\Delta$ and therefore $t_v>0$.  Thus every positive entry of
$Q$ corresponds to a positive entry of $M_{\mathbf t}$, as required by
Lemma~\ref{lem:bethe}.

We apply Lemma~\ref{lem:bethe} with $P=M_{\mathbf t}$ and with the
matrix $Q$ defined above.  For each of the $d(v)=p_v\Delta$ edges
leaving $v$, the corresponding entries of $P$ and $Q$ are $1$ and
$1/\Delta$.  Since
\[
  \left(1-\frac1\Delta\right)^{\Delta-1}>e^{-1},
\]
their combined contribution to the Bethe product is
\[
  \Delta^{p_v}
  \left(1-\frac1\Delta\right)^{p_v(\Delta-1)}
  >\left(\frac{\Delta}{e}\right)^{p_v}.
\]
If $p_v<1$, then $Q_{vv}=1-p_v>0$, and the diagonal entries
$P_{vv}=t_v$ and $Q_{vv}=1-p_v$ contribute
\[
  \left(\frac{t_v}{1-p_v}\right)^{1-p_v}p_v^{p_v}.
\]
If $p_v=1$, then $Q_{vv}=0$.  The product in
Lemma~\ref{lem:bethe} ranges only over entries for which $Q_{ij}>0$,
so there is no diagonal factor at $v$.  Multiplying the edge and
diagonal contributions gives
\begin{equation}\label{eq:bethe-lower}
  \per M_{\mathbf t}
  >
  \prod_v
  \left(\frac{\Delta}{e}\right)^{p_v}p_v^{p_v}
  \prod_{v:p_v<1}
  \left(\frac{t_v}{1-p_v}\right)^{1-p_v}.
\end{equation}

\medskip\noindent
\emph{The determinant upper bound.}
Lemma~\ref{lem:weighted-bound} gives, for every $s>0$,
\begin{equation}\label{eq:eulerian-upper}
  \per M_{\mathbf t}
  \le
  \prod_v
  \sqrt{t_v^2+s^{-2}}
  \left(1+\frac{k-1}{k}d(v)s^2\right)^{k/2}.
\end{equation}

\medskip\noindent
\emph{Choosing the parameters.}
Set
\[
  s^{-2}:=\Delta(k-1),
  \qquad
  t_v
  :=s^{-1}\sqrt{\frac{1-p_v}{p_v}}.
\]
For each $v$, write $p=p_v$, and let $R_v$ be the ratio of the
corresponding factors in the right-hand
sides of~\eqref{eq:bethe-lower} and~\eqref{eq:eulerian-upper}, with the
diagonal factor in~\eqref{eq:bethe-lower} omitted when $p=1$.
For $p<1$, the definition of $R_v$ gives
\begin{align*}
  R_v
  &=
  \frac{(\Delta/e)^p p^p
    \left(t_v/(1-p)\right)^{1-p}}
  {\sqrt{t_v^2+s^{-2}}
    \left(1+\frac{k-1}{k}d(v)s^2\right)^{k/2}}\\
  &=e^{-p}
  \left(\frac{d(v)^3}{\Delta^2(k-1)}\right)^{p/2}
  (1-p)^{-(1-p)/2}
  (1+p/k)^{-k/2}.
\end{align*}
The second equality follows by substituting $d(v)=p\Delta$ and the
chosen values of $s$ and $t_v$.
When $p=1$, the diagonal factor is absent and $t_v=0$.  Direct
substitution gives the same expression, with the factor involving
$1-p$ interpreted as $1$.  By the hypothesis,
\[
  \left(\frac{d(v)^3}{\Delta^2(k-1)}\right)^{p/2}
  \ge e^{3p/2}.
\]
Moreover, $(1-p)^{-(1-p)/2}\ge1$, while $1+x<e^x$ gives
$(1+p/k)^{-k/2}>e^{-p/2}$.  Hence
\[
  R_v>e^{-p}e^{3p/2}e^{-p/2}=1.
\]
Thus the right-hand side of~\eqref{eq:bethe-lower} is larger than that
of~\eqref{eq:eulerian-upper}, a contradiction.
\end{proof}

\begin{corollary}\label{cor:all-moduli}
Let $k\ge2$.  If $d\ge e^3(k-1)$, then every $d$-regular digraph
contains pairwise edge-disjoint directed cycles
\[
  C_2,C_3,\ldots,C_k
\]
such that the length of $C_r$ is divisible by $r$ for every
$2\le r\le k$.
\end{corollary}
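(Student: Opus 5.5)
We find the cycles greedily, in the order $r=k,k-1,\ldots,2$, and each time delete the edges of a whole directed cycle cover rather than only the cycle we keep. Deleting a cover keeps the digraph regular, and this is what makes the greedy argument go through.

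Start with $D_k:=D$, which is $d$-regular with $d\ge e^3(k-1)$. At stage $r$ we have a $d_r$-regular digraph $D_r$, where $d_r:=d-(k-r)$, and we use the labelling of $D_r$ by $\mathbb Z_r$ in which every edge gets the label $1$.

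1. A zero-sum cycle for this labelling is exactly a cycle whose length is divisible by $r$.
2. We want Theorem~\ref{thm:main} (with group order $r$) to give such a cycle in $D_r$, so we need $d_r\ge e^3(r-1)$.
3. Call the cycle we find $C_r$.
4. Next we pass to an $(d_r-1)$-regular digraph that avoids every edge of $C_r$. Extend $C_r$ to a directed cycle cover of $D_r$ and delete all edges of that cover.

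The extension in step 4 is the main obstacle. An arbitrary cover need not contain $C_r$. Deleting only the edges of $C_r$ would break regularity, and Theorem~\ref{thm:main} needs regularity.

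To handle this, note that deleting $V(C_r)$ leaves the subdigraph $D_r-V(C_r)$. In it, every vertex still has in- and outdegree at most $d_r$, and has lost at most $|C_r|$ neighbours. That alone does not make it regular, so instead we work with the paper's cycle-cover machinery. Theorem~\ref{thm:vertex-packing} is proved by showing that some cover contains zero-sum cycles. With $r$ in place of $k$, the same argument gives directly a cover containing at least one zero-sum cycle whenever $p(1)\neq p(1-r)$ is forced.

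Concretely, take the degree-$0$ case of the finite-difference argument. Suppose every cover of $D_r$ had no zero-sum cycle. Then $p$ is constant, so $p(1)=p(1-r)$. By Lemma~\ref{lem:hadamard-coefficient} with $S_h$ blocks ($q=r$, $\rho^2=(r-1)/r$), and by Lemma~\ref{lem:lower}, this gives
\[
d_r/e<\sqrt{e(r-1)d_r},
\]
which contradicts $d_r\ge e^3(r-1)$. So some cover of $D_r$ contains a cycle $C_r$ of length divisible by $r$. We keep $C_r$ and delete the whole cover. Since a cover uses exactly one in-edge and one out-edge at each vertex, the result $D_{r-1}$ is $(d_r-1)$-regular.

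It remains to check the degree condition at every stage. We have $d_r=d-(k-r)\ge e^3(k-1)-(k-r)$, and we need this to be at least $e^3(r-1)$. That is equivalent to
\[
(e^3-1)(k-r)\ge0,
\]
which holds. Finally, the cycles $C_k,\ldots,C_2$ are pairwise edge-disjoint, because each later $C_r$ lies in a digraph from which the edges of every earlier cover, and hence every earlier $C_{r'}$, have been removed.
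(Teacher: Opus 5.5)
Your argument is correct, but it takes a different route from the paper.

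\textbf{The paper's route.} The paper deletes only the edges of $C_r$. After $t=k-r$ steps the digraph is therefore merely Eulerian, with $\delta_r\ge d-t$ and $\Delta_r\le d$. It then applies Theorem~\ref{thm:eulerian}, checking $\delta_r^3/\Delta_r^2\ge(d-t)^3/d^2\ge d-3t\ge e^3(r-1)$, which uses $e^3\ge3$.

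\textbf{Your route.} You observe that the proof of Theorem~\ref{thm:main} actually produces a zero-sum cycle \emph{inside a directed cycle cover}. If no cover contained one, then $p\equiv\per A$. On the other hand, $p(1-r)=[z_1\cdots z_n]F$ for the determinant built from the $r\times r$ blocks $S_h$; this is Lemma~\ref{lem:cycle-product}, or equivalently the $j=1$ case in the proof of Theorem~\ref{thm:vertex-packing}. So the estimate of Theorem~\ref{thm:main} applies verbatim. Removing the whole cover, as in Theorem~\ref{thm:packing}, keeps the digraph exactly $(d-(k-r))$-regular. The degree check then reduces to $(e^3-1)(k-r)\ge0$.

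\textbf{Comparison.} Your route keeps the corollary entirely within the regular machinery (van der Waerden plus Hadamard) and avoids the Bethe permanent bound. The paper's route removes far fewer edges per step and serves as an application of the Eulerian theorem. For this statement both give the same threshold $d\ge e^3(k-1)$, which is binding at the first step $r=k$.

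\textbf{Presentation.} In a write-up, say explicitly that you rerun the $\deg p=0$ case with the sharper row norm $\sqrt{(r-1)/r}$. Do not cite Theorem~\ref{thm:vertex-packing} directly, since its guarantee $\lfloor d_r/(25r)\rfloor$ can be $0$ under your hypothesis. Also drop the detour about deleting $V(C_r)$ and the phrase ``extend $C_r$ to a cover''. You choose the cover first and take $C_r$ from it, so no extension step is needed.
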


\begin{proof}
Starting with $r=k$ and proceeding downward, we find $C_r$ and delete
its edges.  Suppose that $t=k-r$ cycles have already been deleted, and
let $D_r$ be the remaining digraph.  Deleting the edges of a directed
cycle decreases the indegree and outdegree of each of its vertices by
one, so $D_r$ is Eulerian.  If $\delta_r$ and $\Delta_r$ are its minimum
and maximum common degrees, then
\[
  \delta_r\ge d-t,
  \qquad
  \Delta_r\le d.
\]
Since $t\le k-2<d$, we have
\[
  \frac{\delta_r^3}{\Delta_r^2}
  \ge \frac{(d-t)^3}{d^2}
  \ge d-3t
  \ge e^3(k-1)-3t
  \ge e^3(r-1).
\]
Label every edge of $D_r$ by $1\in\mathbb Z_r$.  Theorem~\ref{thm:eulerian}
gives a zero-sum directed cycle $C_r$, whose length is divisible by
$r$.  Deleting its edges leaves an Eulerian digraph for the next step.
The resulting cycles are pairwise edge-disjoint.
\end{proof}

\subsection{Packing zero-sum cycles}

\begin{theorem}[Vertex-disjoint zero-sum cycles]
\label{thm:eulerian-vertex-packing}
Let $\Gamma$ be a finite group of order $k\ge2$.  Let $D$ be an
Eulerian digraph with a $\Gamma$-labelling and minimum and maximum
common degrees $\delta$ and $\Delta>0$.  Then $D$ contains at least
\[
  \left\lfloor\frac{\delta^3}{4e^3k\Delta^2}\right\rfloor
\]
pairwise vertex-disjoint zero-sum directed cycles.
\end{theorem}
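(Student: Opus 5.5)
Combine the weighted-permanent framework of Theorem~\ref{thm:eulerian} with the interpolation trick of Theorem~\ref{thm:vertex-packing}. Write $V(D)=[n]$, take $p_v=d(v)/\Delta$, the doubly stochastic $Q$, and weights $t_v$ as in the proof of Theorem~\ref{thm:eulerian}, with the scale $s$ left free. Put $r:=\lfloor\delta^3/(4e^3k\Delta^2)\rfloor$; if $r=0$ there is nothing to prove. A \emph{partial cover} is a collection $\mathcal K$ of pairwise vertex-disjoint cycles, and each carries weight $\prod_{v\notin V(\mathcal K)}t_v$. Define
\[
  p(x):=\sum_{\mathcal K}\Bigl(\prod_{v\notin V(\mathcal K)}t_v\Bigr)x^{N_0(\mathcal K)},
\]
so that $p(1)=\per M_{\mathbf t}$ by~\eqref{eq:weighted-permanent-expansion}. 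Suppose, for a contradiction, that no partial cover contains $r$ zero-sum cycles. Then $\deg p<r$, and the finite-difference identity~\eqref{eq:cover-finite-difference} gives $p(1)=\sum_{j=1}^r(-1)^{j-1}\binom rj p(1-jk)$. Hence $p(1)<2^r\max_j|p(1-jk)|$.

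\emph{Upper bound.} Use the block matrices $R^{(j)}_{w(uv)}$ of size $jk$, whose rows have norm at most $1$, and form $F_j=\det(I-Z_{jk}B^{(j)})$. By Lemma~\ref{lem:cycle-product}, in $\mathcal A$ we have $F_j=\prod_C(1-\tr R^{(j)}_{w(C)}z_{V(C)})$. Therefore $[z_S]F_j=\sum_{\mathcal K \text{ covering } S}(1-jk)^{N_0(\mathcal K)}$, and so
\[
  p(1-jk)=\sum_S\Bigl(\prod_{v\notin S}t_v\Bigr)[z_S]F_j .
\]
Repeat the proof of Lemma~\ref{lem:weighted-bound} with $k$ replaced by $jk$: average over $(jk+1)$st roots of unity, apply Hadamard's inequality, and use Cauchy--Schwarz. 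This yields
\[
  |p(1-jk)|\le\prod_v\sqrt{t_v^2+s^{-2}}\,\bigl(1+d(v)s^2\bigr)^{jk/2}\le\prod_v\sqrt{t_v^2+s^{-2}}\,\bigl(1+d(v)s^2\bigr)^{rk/2}.
\]

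\emph{Lower bound and parameters.} The Bethe lower bound~\eqref{eq:bethe-lower} is unchanged. Choose $s^{-2}:=\Delta rk$ (playing the role of $\Delta(k-1)$) and $t_v:=s^{-1}\sqrt{(1-p_v)/p_v}$. The same per-vertex ratio computation as in Theorem~\ref{thm:eulerian} then gives
\[
  R_v\ge e^{-p}\Bigl(\frac{d(v)^3}{\Delta^2 rk}\Bigr)^{p/2}\bigl(1+p/(rk)\bigr)^{-rk/2}\ge e^{-3p/2}\Bigl(\frac{\delta^3}{\Delta^2rk}\Bigr)^{p/2}\ge e^{-3p/2}(4e^3)^{p/2}=2^{p_v}.
\]
So the lower bound exceeds $\prod_v 2^{p_v}$ times the upper bound, and $p(1)>2^{\sum_v p_v}\max_j|p(1-jk)|$. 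Since $\sum_vp_v\ge n\delta/\Delta$, it remains to check $r\le n\delta/\Delta$. This holds because $r\le\delta^3/(4e^3\Delta^2)<\delta\le n\delta/\Delta$, using $\delta\le\Delta<n$. This contradicts $p(1)<2^r\max_j|p(1-jk)|$. The factor $4$ in the statement is exactly what produces the $2^{p_v}$ slack.

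\emph{Main obstacle.} The delicate step is keeping the $2^r$ from the interpolation inside the per-vertex slack, since the ratio only gains $2^{p_v}$, which is weak at low-degree vertices. If the naive bookkeeping fails, I would redo the choice $s^{-2}=c\Delta rk$ with a tunable constant $c$, and absorb any strict-inequality losses, such as $(1-1/\Delta)^{\Delta-1}>e^{-1}$, into the comparison. Nonnegativity of all terms of $p(1)$ needs no comment. The zero-sum cycles of the resulting partial cover are pairwise vertex-disjoint, which gives the theorem.
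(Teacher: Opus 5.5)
Your proposal is correct and is essentially the paper's proof. It uses the same generating polynomial over partial covers and the same choices $s^{-2}=\Delta rk$ and $t_v=s^{-1}\sqrt{(1-p_v)/p_v}$, compares the Bethe factor with the Hadamard factor vertex by vertex to gain $2^{p_v}$, and absorbs the interpolation loss through $\sum_v p_v\ge n\delta/\Delta>\delta>r$. The ``naive bookkeeping'' you worry about in your last paragraph goes through exactly as you wrote it, so no tunable constant is needed.
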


\begin{proof}
Put
\[
  r:=\left\lfloor\frac{\delta^3}{4e^3k\Delta^2}\right\rfloor.
\]
There is nothing to prove if $r=0$, so suppose that $r\ge1$.

\medskip\noindent
\emph{The generating polynomial.}
Write $V(D)=[n]$, let $A$ be the adjacency matrix of $D$, and put
\[
  p_v:=\frac{d(v)}\Delta,
  \qquad \lambda:=rk,
  \qquad s^{-2}:=\Delta\lambda,
  \qquad
  t_v:=s^{-1}\sqrt{\frac{1-p_v}{p_v}}.
\]
The assumption $r\ge1$ implies $\delta>1$, so these quantities are
well defined.  The weight $t_v$ is zero precisely when $p_v=1$.
Set
\[
  M_{\mathbf t}:=A+\diag(t_1,\ldots,t_n).
\]

Let $\mathcal C$ range over collections of pairwise vertex-disjoint
directed cycles, including the empty collection.  Write $V(\mathcal C)$
for the union of their vertex sets and $N_0(\mathcal C)$ for the number
of zero-sum cycles in $\mathcal C$, and define
\[
  P(x):=
  \sum_{\mathcal C}x^{N_0(\mathcal C)}
  \prod_{v\notin V(\mathcal C)}t_v.
\]
At $x=1$, group the cycle collections according to their covered
vertex set $S$.  Equations~\eqref{eq:cycle-cover-permanent}
and~\eqref{eq:weighted-permanent-expansion} give
\begin{align*}
  P(1)
  &=\sum_{\mathcal C}\prod_{v\notin V(\mathcal C)}t_v\\
  &=\sum_{S\subseteq[n]}
    \left(\prod_{v\notin S}t_v\right)\per A[S]\\
  &=\per M_{\mathbf t}.
\end{align*}
Suppose for a contradiction that $D$ contains fewer than $r$
pairwise vertex-disjoint zero-sum cycles.  Then $\deg P<r$, so the
finite-difference identity~\eqref{eq:cover-finite-difference} gives
\begin{equation}\label{eq:eulerian-finite-difference}
  P(1)=\sum_{j=1}^r(-1)^{j-1}\binom rj P(1-jk).
\end{equation}
We will prove that, for every $1\le j\le r$,
\begin{equation}\label{eq:eulerian-shifted-target}
  |P(1-jk)|<2^{-r}P(1).
\end{equation}
Since the sum of the binomial coefficients
in~\eqref{eq:eulerian-finite-difference} is $2^r-1$, this estimate will
give the desired contradiction.

\medskip\noindent
\emph{Bounding $P(1-jk)$.}
Fix $1\le j\le r$.  Just as in the proof of
Theorem~\ref{thm:vertex-packing}, for $h\in\Gamma$, let $P_h$ be the
$k\times k$ permutation matrix of the map $x\mapsto hx$, let $J$ be
the $k\times k$ all-ones matrix, and put $S_h:=P_h-J/k$.  Let
$R_h^{(j)}$ be the block-diagonal matrix with one block $S_h$ and
$j-1$ blocks $P_h$.
Then
\[
  R_g^{(j)}R_h^{(j)}=R_{gh}^{(j)},
  \qquad
  -\tr R_h^{(j)}=
  \begin{cases}
    1-jk,&h=1_\Gamma,\\
    1,&h\ne1_\Gamma.
  \end{cases}
\]
Let $B^{(j)}$ be the block matrix whose $uv$-block is
$R_{w(uv)}^{(j)}$ when $uv\in E(D)$ and zero otherwise, and define
\[
  F_j(\mathbf z)
  :=\det\left(
    I_{njk}-\diag(z_1I_{jk},\ldots,z_nI_{jk})B^{(j)}
  \right).
\]
By Lemma~\ref{lem:cycle-product}, in $\mathcal A$ we have
$F_j(\mathbf z)=\prod_C(1-\tr(B^{(j)}(C))z_{V(C)})$, where the product
is over the simple directed cycles of $D$.  For each such cycle $C$,
the multiplicative rule above gives
$B^{(j)}(C)=R_{w(C)}^{(j)}$.  Thus the coefficient of $z_{V(C)}$ in
the factor indexed by $C$ is $1-jk$ if $C$ is zero-sum and $1$
otherwise.  Since $z_v^2=0$ in $\mathcal A$, only products indexed by
vertex-disjoint cycles survive.  The surviving products that contribute
to $[z_S]F_j$ are therefore precisely the directed cycle covers
$\mathcal C$ of the subdigraph induced by $S$.  Hence
\[
  [z_S]F_j
  =\sum_{\mathcal C\text{ a directed cycle cover of }S}
  (1-jk)^{N_0(\mathcal C)}.
\]
Consequently,
\[
  P(1-jk)
  =
  \sum_{S\subseteq[n]}
  \left(\prod_{v\notin S}t_v\right)[z_S]F_j.
\]
Each variable has degree at most $jk$ in $F_j$.  Let
$u_1,\ldots,u_n$ be independent and uniform on the $(jk+1)$st roots
of unity.  For $0\le a\le jk$,
\[
  \mathbb E\left[(t_v+s^{-1}u_v^{-1})(su_v)^a\right]
  =
  \begin{cases}
    t_v,&a=0,\\
    1,&a=1,\\
    0,&2\le a\le jk.
  \end{cases}
\]
Applying this identity independently in every variable gives
\[
  P(1-jk)
  =\mathbb E\left[
    \prod_{v=1}^n(t_v+s^{-1}u_v^{-1})
    F_j(su_1,\ldots,su_n)
  \right].
\]
Every row of $R_h^{(j)}$ has norm at most $1$.  For fixed
$u_1,\ldots,u_n$, the identity entry and the $d(v)$ edge-block rows
in a scalar row indexed by $v$ lie in distinct block columns because
$D$ is simple and loopless.  That row therefore has squared norm at
most $1+d(v)s^2$, and Hadamard's inequality gives
\[
  |F_j(su_1,\ldots,su_n)|
  \le\prod_v(1+d(v)s^2)^{jk/2}.
\]
Moreover,
$\mathbb E|t_v+s^{-1}u_v^{-1}|\le\sqrt{t_v^2+s^{-2}}$ by
Cauchy--Schwarz.  Thus, setting
$U_{v,j}:=\sqrt{t_v^2+s^{-2}}(1+d(v)s^2)^{jk/2}$ and using
independence, we obtain
\begin{equation}\label{eq:eulerian-packing-upper}
  |P(1-jk)|\le\prod_vU_{v,j}.
\end{equation}

\medskip\noindent
\emph{Comparing the bounds.}
For each vertex $v$, define
\[
  L_v:=
  \left(\frac{\Delta}{e}\right)^{p_v}p_v^{p_v}
  \begin{cases}
    \left(\dfrac{t_v}{1-p_v}\right)^{1-p_v},&p_v<1,\\[1.2ex]
    1,&p_v=1.
  \end{cases}
\]
The derivation of the Bethe lower bound~\eqref{eq:bethe-lower} applies
to the present weights because $t_v>0$ whenever $p_v<1$.  Since
$P(1)=\per M_{\mathbf t}$, it gives
\[
  P(1)>\prod_vL_v.
\]

To prove $|P(1-jk)|<2^{-r}P(1)$, the lower bound above
and~\eqref{eq:eulerian-packing-upper} show that it suffices to prove
\[
  \prod_vU_{v,j}<2^{-r}\prod_vL_v.
\]
Since $D$ is simple and loopless, $\Delta<n$.  Moreover,
$\delta^3/\Delta^2\le\delta$, so the definition of $r$ gives
$r<\delta$.  Hence
\[
  \sum_vp_v
  =\frac{1}{\Delta}\sum_vd(v)
  \ge\frac{n\delta}{\Delta}
  >\delta
  >r.
\]
It is therefore enough to prove
$L_v/U_{v,j}>2^{p_v}$ for every vertex $v$.

Fix a vertex $v$ and write $p=p_v$.  Substituting the choices of $s$
and $t_v$ into the displayed formulas for $L_v$ and $U_{v,j}$ gives
\[
  \frac{L_v}{U_{v,j}}
  =e^{-p}
  \left(\frac{d(v)^3}{\Delta^2\lambda}\right)^{p/2}
  (1-p)^{-(1-p)/2}
  \left(1+\frac p\lambda\right)^{-jk/2},
\]
where the factor involving $1-p$ is interpreted as $1$ when $p=1$.
Since $jk\le rk=\lambda$ and $1+x<e^x$, we have
\[
  \left(1+\frac p\lambda\right)^{-jk/2}>e^{-p/2}.
\]
Also $(1-p)^{-(1-p)/2}\ge1$.  Since $d(v)\ge\delta$ and
$r\le\delta^3/(4e^3k\Delta^2)$, we also have
$d(v)^3/(e^3\Delta^2rk)\ge4$.  Consequently,
\[
  \frac{L_v}{U_{v,j}}
  >\left(\frac{d(v)^3}{e^3\Delta^2rk}\right)^{p_v/2}
  \ge2^{p_v}.
\]
Multiplying these inequalities and using $\sum_vp_v>r$, the two product
bounds give
\[
  |P(1-jk)|
  \le\prod_vU_{v,j}
  <2^{-\sum_vp_v}\prod_vL_v
  <2^{-r}\prod_vL_v
  <2^{-r}P(1).
\]
Thus~\eqref{eq:eulerian-shifted-target} holds.  Since $j$ was
arbitrary, it holds for every $1\le j\le r$.  Taking absolute values
in~\eqref{eq:eulerian-finite-difference} now gives
\[
  P(1)
  \le\sum_{j=1}^r\binom rj|P(1-jk)|
  <(2^r-1)2^{-r}P(1),
\]
a contradiction.
\end{proof}

\begin{theorem}[Edge-disjoint zero-sum cycles]
\label{thm:eulerian-edge-packing}
Let $\Gamma$ be a finite group of order $k\ge2$.  Let $D$ be an
Eulerian digraph with a $\Gamma$-labelling and minimum and maximum
common degrees $\delta$ and $\Delta>0$.  If
\[
  \frac{\delta^3}{\Delta^2}\ge32e^3k,
\]
then $D$ contains at least
\[
  \frac{\delta^4}{128e^3k\Delta^2}
\]
pairwise edge-disjoint zero-sum directed cycles.
\end{theorem}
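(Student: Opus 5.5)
The plan is to iterate Theorem~\ref{thm:eulerian-vertex-packing}, deleting edges as we go, in the same way that Theorem~\ref{thm:packing} iterates Theorem~\ref{thm:vertex-packing}. The difference from the regular case is that we only have a family of vertex-disjoint zero-sum cycles, not a whole cycle cover. So after deleting edges the degrees no longer drop uniformly, and we must track $\delta$ and $\Delta$ separately.

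\emph{Setup.} Set $D_0:=D$ and $T:=\lfloor\delta/2\rfloor$. For $t=0,1,\ldots,T$, apply Theorem~\ref{thm:eulerian-vertex-packing} to $D_t$ with the restricted labelling. This gives a family $\mathcal F_t$ of pairwise vertex-disjoint zero-sum directed cycles in $D_t$. Retain these cycles and let $D_{t+1}$ be $D_t$ with all edges of the cycles in $\mathcal F_t$ deleted.

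\emph{Degrees after deletion.} The cycles of $\mathcal F_t$ are vertex-disjoint, so each vertex loses at most one outgoing and one incoming edge, and exactly one of each if it is covered. Hence $D_{t+1}$ is again an Eulerian digraph, still simple and loopless. Its degrees satisfy
\[
  d_{D_{t+1}}(v)\ge d_{D_t}(v)-1,
  \qquad
  d_{D_{t+1}}(v)\le d_{D_t}(v).
\]
Inductively, the common degrees $\delta_t,\Delta_t$ of $D_t$ satisfy
\[
  \delta_t\ge\delta-t\ge\delta-T\ge\delta/2,
  \qquad
  \Delta_t\le\Delta,
\]
and in particular $\Delta_t\ge\delta_t>0$. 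Edge-disjointness of all retained cycles is automatic: every cycle chosen in round $t$ uses only edges of $D_t$, and the edges used in earlier rounds have been deleted.

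\emph{Counting.} In round $t$ we have
\[
  x_t:=\frac{\delta_t^3}{4e^3k\Delta_t^2}\ge\frac{(\delta/2)^3}{4e^3k\Delta^2}=\frac{\delta^3}{32e^3k\Delta^2}\ge1
\]
by the hypothesis. Since $\lfloor x\rfloor\ge x/2$ for $x\ge1$, round $t$ contributes at least $\delta^3/(64e^3k\Delta^2)$ cycles. There are $T+1\ge\delta/2$ rounds, so the total is at least
\[
  \frac{\delta}{2}\cdot\frac{\delta^3}{64e^3k\Delta^2}=\frac{\delta^4}{128e^3k\Delta^2}.
\]

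\emph{Expected difficulty.} There is no real obstacle beyond the bookkeeping above. The point needing care is to stop after about $\delta/2$ rounds, which keeps $\delta_t^3/\Delta_t^2$ within a factor of $8$ of the original ratio. That in turn keeps the floor in Theorem~\ref{thm:eulerian-vertex-packing} at least $1$ and comparable to its argument, which is exactly what the constant $32$ in the hypothesis is chosen to guarantee.
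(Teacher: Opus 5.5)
Your proposal is correct and follows essentially the same argument as the paper: repeatedly apply Theorem~\ref{thm:eulerian-vertex-packing}, delete the edges of the vertex-disjoint cycles so that $\delta_t\ge\delta-t$ and $\Delta_t\le\Delta$, keep $\delta_t\ge\delta/2$ for about $\delta/2$ rounds, and use $\lfloor x\rfloor\ge x/2$ with the hypothesis $\delta^3/\Delta^2\ge32e^3k$. The only cosmetic difference is in the indexing. You run $\lfloor\delta/2\rfloor+1$ rounds and get $\delta_t\ge\lceil\delta/2\rceil$, whereas the paper runs $\lceil\delta/2\rceil$ rounds and gets $\delta_t>\delta/2$.
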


\begin{proof}
Starting with $D_0:=D$, repeatedly apply
Theorem~\ref{thm:eulerian-vertex-packing} and delete the edges of the
zero-sum cycles it supplies.  Since the cycles chosen in each round are
vertex-disjoint, every common degree decreases by at most one.  The
remaining digraph is Eulerian, and after $t$ rounds its minimum and
maximum common degrees $\delta_t$ and $\Delta_t$ satisfy
\[
  \delta_t\ge\delta-t,
  \qquad
  \Delta_t\le\Delta.
\]
For every $0\le t<\lceil\delta/2\rceil$, we have
$\delta_t>\delta/2$.  The number of cycles found in round $t$ is
therefore at least
\[
  \left\lfloor
  \frac{\delta_t^3}{4e^3k\Delta_t^2}
  \right\rfloor
  \ge
  \left\lfloor
  \frac{\delta^3}{32e^3k\Delta^2}
  \right\rfloor
  \ge
  \frac{\delta^3}{64e^3k\Delta^2}.
\]
The last inequality uses $\lfloor x\rfloor\ge x/2$ for $x\ge1$,
which applies by the hypothesis.  Cycles obtained in different rounds
are edge-disjoint.  Since $\lceil\delta/2\rceil\ge\delta/2$, their
total number is at least
\[
  \frac\delta2\cdot
  \frac{\delta^3}{64e^3k\Delta^2}
  =\frac{\delta^4}{128e^3k\Delta^2}.
\]
\end{proof}

\section{Sharpness constructions}

Proposition~\ref{prop:obstruction} gives the sharp order for both
regular packing bounds.  Its underlying digraph is the
balanced blow-up of a directed cycle, a standard extremal construction
for controlling directed cycle lengths~\cite[p.~252]{KellyKuhnOsthus}.
Since regular digraphs are Eulerian, the proposition also shows that
the Eulerian packing bounds have the correct order when
$\delta=\Delta$.  Proposition~\ref{prop:circulant-obstruction} gives a
$k$-regular digraph with no zero-sum cycle for $\mathbb Z_k$.

\begin{proposition}\label{prop:obstruction}
Let $k\ge2$, $d\ge1$, and let $t\ge2$ be relatively prime to $k$.
There is a $d$-regular digraph with constant edge label
$1\in\mathbb Z_k$ in which every zero-sum cycle has length at least
$tk$.  Consequently, it contains at most $\lfloor d/k\rfloor$ pairwise
vertex-disjoint zero-sum cycles and at most $\lfloor d^2/k\rfloor$
pairwise edge-disjoint zero-sum cycles.  If $d<k$, it has no zero-sum
cycle.
\end{proposition}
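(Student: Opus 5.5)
The plan is to take the balanced blow-up of a directed $t$-cycle with the constant label $1\in\mathbb Z_k$, and then read off all three consequences by counting vertices and edges.

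\emph{The construction.} Let the vertex set be $V_0\cup V_1\cup\cdots\cup V_{t-1}$, a disjoint union of $t$ classes each of size $d$, with indices read modulo $t$. Put an edge from $u$ to $v$ exactly when $u\in V_i$ and $v\in V_{i+1}$ for some $i$. Each vertex of $V_i$ then has outdegree $|V_{i+1}|=d$ and indegree $|V_{i-1}|=d$, so the digraph is $d$-regular. Since $t\ge2$, the classes $V_i$ and $V_{i+1}$ are distinct, so there are no loops. There is at most one edge per ordered pair, so the digraph is simple. When $t=2$ opposite edges occur, but the paper allows this. Label every edge by $1\in\mathbb Z_k$.

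\emph{Cycle lengths.} Every edge advances the class index by exactly one modulo $t$. A directed cycle returns to its starting class, so its length is divisible by $t$. A cycle is zero-sum exactly when its length is divisible by $k$. Since $\gcd(t,k)=1$, a zero-sum cycle has length divisible by $tk$, hence of length at least $tk$. This is the only step that needs care, and it is just the coprimality observation.

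\emph{Packing bounds.} The digraph has $n=td$ vertices and $nd=td^2$ edges.
\begin{itemize}
\item Each zero-sum cycle uses at least $tk$ vertices. Hence pairwise vertex-disjoint zero-sum cycles number at most $td/(tk)=d/k$, and since the count is an integer, at most $\lfloor d/k\rfloor$.
\item Each zero-sum cycle also uses at least $tk$ edges. Hence pairwise edge-disjoint zero-sum cycles number at most $td^2/(tk)=d^2/k$, so at most $\lfloor d^2/k\rfloor$.
\item If $d<k$, then $n=td<tk$. A simple cycle of length at least $tk$ cannot fit, so there is no zero-sum cycle at all. (This also matches $\lfloor d/k\rfloor=0$.)
\end{itemize}
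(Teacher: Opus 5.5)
Your proposal is correct and matches the paper's proof: it uses the same balanced blow-up of a directed $t$-cycle with classes of size $d$ and constant label $1\in\mathbb Z_k$, and the same coprimality step showing that every zero-sum cycle has length divisible by $tk$. The three consequences then follow from the same counts of $td$ vertices and $td^2$ edges.
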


\begin{proof}
Take disjoint vertex classes
\[
  V_0,V_1,\ldots,V_{t-1},
  \qquad |V_i|=d,
\]
and include every edge from $V_i$ to $V_{i+1}$, taking subscripts
modulo $t$.  Each vertex has exactly $d$ outgoing edges and $d$ incoming
edges, so the digraph is $d$-regular.  It is simple and loopless.

Every directed cycle has length divisible by $t$, because each edge
advances by one vertex class.  Since every edge has label
$1\in\mathbb Z_k$, a zero-sum cycle also has length divisible by $k$.
The coprimality of $t$ and $k$ therefore makes its length divisible by
$tk$.

The digraph has $td$ vertices, while every zero-sum cycle uses at least
$tk$ vertices.  Hence a vertex-disjoint collection has size at most
$td/(tk)=d/k$.  It also has $td^2$ edges, so an edge-disjoint
collection has size at most $td^2/(tk)=d^2/k$.  Finally, a simple cycle
has length at most the total number $td$ of vertices.  If $d<k$, then
$td<tk$, so no zero-sum cycle exists.
\end{proof}

The underlying digraph in the next construction is the circulant
$C_N^k$ used by Alon, McDiarmid, and Molloy as a sharpness example for
unlabelled cycle packings~\cite[p.~232]{AlonMcDiarmidMolloy}.

\begin{proposition}
\label{prop:circulant-obstruction}
For every $k\ge2$ and $m\ge2$, there is a strongly connected,
$k$-regular digraph on $mk$ vertices with a $\mathbb Z_k$-labelling
and no zero-sum directed cycle.
\end{proposition}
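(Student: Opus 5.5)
We take the circulant $C_{mk}^k$: vertex set $\mathbb Z_{N}$ with $N:=mk$, and an edge from $i$ to $i+s$ for every $s\in\{1,\ldots,k\}$. Since $k<N$ (because $m\ge2$), the $k$ shifts are distinct and nonzero modulo $N$. Hence the digraph is simple and loopless, and every vertex has outdegree and indegree exactly $k$. Strong connectivity is immediate from the edges $i\to i+1$, which form a Hamiltonian cycle.

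For the labelling, label the edge $i\to i+s$ by $s\bmod k\in\mathbb Z_k$. The obvious potential choice, labelling by the shift itself, makes every cycle's label sum equal to its total winding $\sum s$, which is a multiple of $N=mk$ and hence zero in $\mathbb Z_k$. That choice therefore fails, and we must break the symmetry at one point. The plan is to let the total shift of a cycle be $wN$, where $w\ge1$ is its winding number, and to arrange that the label sum is $wN+(\text{correction})\equiv(\text{correction})\pmod k$. The correction should count the wraparound crossings in a way that is never $0\bmod k$.

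Concretely, identify the vertices with $\{0,\ldots,N-1\}$ and give the edge $i\to i+s$ (sum taken modulo $N$) the label $1$ if the integer sum $i+s\ge N$, that is, if the edge crosses from the top of the range back to the bottom, and the label $0$ otherwise. A directed cycle advances a total of $wN$ around $\mathbb Z_N$, and the number of edges crossing the cut between $N-1$ and $0$ is exactly $w$, since each step has length less than $N$. Its label sum is therefore $w\bmod k$. The main point to check is that $1\le w\le k-1$ for every simple cycle, so that $w\not\equiv0\pmod k$. A simple cycle visits at most $N$ distinct vertices, and each step advances by at most $k$, so the total advance satisfies $wN\le kN$, giving $w\le k$. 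Equality $w=k$ would require the cycle to use all $N$ vertices with every step equal to $k$. But the steps of length $k$ from $0$ visit only the multiples of $k$, a cycle of length $m<N$, so they cannot cover all $N$ vertices. Hence $w\le k-1$. Also $w\ge1$, because the cycle has positive total advance. So $w\not\equiv0\pmod k$ and no cycle is zero-sum.

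The main obstacle is this winding bound, in particular excluding the extremal case $w=k$. The argument above settles it: that case forces $N$ steps, all of length exactly $k$, which is impossible because the step-$k$ orbit has size $m<mk$.
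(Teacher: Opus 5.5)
Your proposal is correct and is essentially the paper's proof: the circulant $C_{mk}^k$ with label $1$ exactly on wraparound edges, the identity $N\sum_i\varepsilon_i=\sum_i s_i$ showing that a cycle's label sum equals its winding number $w$, and the bound $1\le w\le k-1$, where $w=k$ is excluded because it forces $N$ steps all equal to $k$ while the $+k$ orbit of any vertex has only $m<mk$ elements. One presentational fix: delete the opening sentence that assigns the label $s\bmod k$, since you immediately discard that labelling and the argument uses only the wraparound labelling.
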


\begin{proof}
Put $N=mk$ and identify the vertices with $\mathbb Z_N$, represented by
$0,1,\ldots,N-1$.  For each vertex $x$ and each $s\in\{1,\ldots,k\}$,
include the edge
\[
  e(x,s):=\bigl(x\longrightarrow x+s\pmod N\bigr).
\]
The resulting digraph is simple, loopless, and $k$-regular.  It is
strongly connected because the edges with $s=1$ form a spanning
directed cycle.

Adapting the standard lower-bound construction for complete bidirected
graphs~\cite{MeszarosSteiner,Diwan}, label the edge with initial vertex
$x$ and step $s$ by
\[
  w(e(x,s))=
  \begin{cases}
    0,&x+s<N,\\
    1,&x+s\ge N,
  \end{cases}
  \qquad\text{in }\mathbb Z_k,
\]
where $x$ and $s$ on the right are their indicated integer
representatives.  Since $s\le k<N$, the edge wraps around the chosen
cyclic order at most once, and its label is $1$ precisely when it does so.

Let $C=x_0x_1\cdots x_{\ell-1}x_0$ be a directed cycle.  For each $i$,
let $s_i\in\{1,\ldots,k\}$ be the step used by the edge $x_ix_{i+1}$,
and let $\varepsilon_i\in\{0,1\}$ be its label, with subscripts taken
modulo $\ell$.  As an equality of integers,
\[
  x_{i+1}=x_i+s_i-N\varepsilon_i.
\]
Summing around the cycle gives
\[
  N\sum_{i=0}^{\ell-1}\varepsilon_i
  =\sum_{i=0}^{\ell-1}s_i.
\]
Write $q=\sum_i\varepsilon_i$.  Since every step is positive, $q\ge1$.
Moreover, since $C$ is simple and hence $\ell\le N$,
\[
  qN=\sum_i s_i\le k\ell\le kN.
\]
If $q=k$, equality must hold throughout, so $\ell=N$ and every step is
$k$.  This is impossible: repeatedly adding $k$ modulo $N=mk$ returns
to the initial vertex after $m<N$ steps.  Hence $1\le q\le k-1$.
The label sum of $C$ in $\mathbb Z_k$ is $q$, so it is nonzero.
\end{proof}

\section{Discussion and open problems}

As discussed in Section~1.3, minimum indegree and minimum outdegree
alone do not force a zero-sum cycle, even when both are arbitrarily
large.  The examples described there are neither strongly connected nor
Eulerian.  Diwan asked whether, for $\mathbb Z_k$-labellings, some bound
depending only on $k$ suffices under strong
connectivity~\cite{Diwan}, and for $k=2$ Thomassen proved the sharp
threshold $3$~\cite{Thomassen1992}.  In view of our results for arbitrary
finite groups, we propose the following stronger version with the
conjecturally sharp threshold, which agrees with the complete case.

\begin{conjecture}\label{conj:strongly-connected}
Let $\Gamma$ be a finite group of order $k\ge2$.  Every strongly
connected digraph with minimum indegree and minimum outdegree at least
$k+1$ contains a zero-sum directed cycle under every $\Gamma$-labelling
of its edges.
\end{conjecture}

The degree $k+1$ in Conjecture~\ref{conj:strongly-connected} cannot be
reduced in a statement uniform over all groups of order $k$:
Proposition~\ref{prop:circulant-obstruction} gives an obstruction for
$\Gamma=\mathbb Z_k$.  The conjectured sharp threshold for Eulerian
digraphs is also open.  Every weakly connected Eulerian digraph is
strongly connected: summing $d^+(v)=d^-(v)$ over a strongly connected
component with no incoming edges shows that it has no outgoing edges.
Thus Conjecture~\ref{conj:strongly-connected} would include the Eulerian
case and would replace the hypothesis of
Theorem~\ref{thm:eulerian} by the sharp condition $\delta\ge k+1$.

A weaker intermediate question is whether the exponent $2$ in the
degree-ratio condition of Theorem~\ref{thm:eulerian} can be reduced.

\begin{question}\label{question:degree-ratio}
Do there exist absolute constants $0\le\alpha<2$ and $C>0$ such that
for every finite group $\Gamma$ of order $k\ge2$, every Eulerian
digraph with minimum and maximum common degrees $\delta$ and $\Delta>0$
contains a zero-sum directed cycle under every $\Gamma$-labelling
whenever
\[
  \frac{\delta^{1+\alpha}}{\Delta^\alpha}\ge Ck?
\]
\end{question}

Theorem~\ref{thm:eulerian} is the corresponding result at $\alpha=2$.
The Eulerian case of Conjecture~\ref{conj:strongly-connected} would give
the other endpoint, $\alpha=0$.

In the complete-digraph setting, cyclic groups force the general
$|\Gamma|+1$ bound, while for $\Gamma=\mathbb Z_p^d$ the best bounds have
order $pd$ up to a logarithmic factor, rather than
$|\Gamma|=p^d$~\cite{LetzterMorrison,ChristophEtAl}.  It would be
interesting to obtain analogous existence and packing bounds here in
terms of the exponent.

\section*{Acknowledgements}

We thank Noga Alon for helpful discussions.  The author used ChatGPT
5.6 Pro for the development of the proof.  The author independently
verified all arguments and references and takes full responsibility for
the paper.

\clearpage
\bibliographystyle{amsplain}
\bibliography{zero_sum_cycles_regular_digraphs}

@misc{AkramiEtAl,
  author = {Akrami, H. and Alon, N. and Chaudhury, B. R.
            and Garg, J. and Mehlhorn, K. and Mehta, R.},
  title = {{EFX} allocations: simplifications and improvements},
  year = {2022},
  note = {arXiv:2205.07638,
          \href{https://arxiv.org/abs/2205.07638}{arXiv:2205.07638}}
}

@article{AlonBregman,
  author = {Alon, N. and Bregman, Z.},
  title = {Every $8$-uniform $8$-regular hypergraph is $2$-colorable},
  journal = {Graphs Combin.},
  volume = {4},
  number = {1},
  year = {1988},
  pages = {303--306},
  note = {\href{https://doi.org/10.1007/BF01864169}
          {doi:10.1007/BF01864169}}
}

@article{AlonDisjoint,
  author = {Alon, N.},
  title = {Disjoint directed cycles},
  journal = {J. Combin. Theory Ser. B},
  volume = {68},
  number = {2},
  year = {1996},
  pages = {167--178},
  note = {\href{https://doi.org/10.1006/jctb.1996.0062}
          {doi:10.1006/jctb.1996.0062}}
}

@article{AlonKrivelevich,
  author = {Alon, N. and Krivelevich, M.},
  title = {Divisible subdivisions},
  journal = {J. Graph Theory},
  volume = {98},
  number = {4},
  year = {2021},
  pages = {623--629},
  note = {\href{https://doi.org/10.1002/jgt.22716}
          {doi:10.1002/jgt.22716}}
}

@article{AlonLinial,
  author = {Alon, N. and Linial, N.},
  title = {Cycles of length $0$ modulo $k$ in directed graphs},
  journal = {J. Combin. Theory Ser. B},
  volume = {47},
  number = {1},
  year = {1989},
  pages = {114--119},
  note = {\href{https://doi.org/10.1016/0095-8956(89)90071-3}
          {doi:10.1016/0095-8956(89)90071-3}}
}

@article{AlonMcDiarmidMolloy,
  author = {Alon, N. and McDiarmid, C. and Molloy, M.},
  title = {Edge-disjoint cycles in regular directed graphs},
  journal = {J. Graph Theory},
  volume = {22},
  number = {3},
  year = {1996},
  pages = {231--237},
  note = {\href{https://web.math.princeton.edu/~nalon/PDFS/cycles6.pdf}
          {author's PDF}}
}

@article{BaiGrzesikLiProrok,
  author = {Bai, Y. and Grzesik, A. and Li, B. and Prorok, M.},
  title = {Cycle lengths in graphs of given minimum degree},
  journal = {J. Combin. Theory Ser. B},
  volume = {180},
  year = {2026},
  pages = {111--150},
  note = {\href{https://doi.org/10.1016/j.jctb.2026.06.003}
          {doi:10.1016/j.jctb.2026.06.003}}
}

@inproceedings{BerendsohnBoyadzhiyskaKozma,
  author = {Berendsohn, B. A. and Boyadzhiyska, S.
            and Kozma, L.},
  title = {Fixed-point cycles and approximate {EFX} allocations},
  booktitle = {47th International Symposium on Mathematical Foundations
               of Computer Science},
  series = {LIPIcs},
  volume = {241},
  year = {2022},
  pages = {17:1--17:13},
  note = {Art. 17,
          \href{https://doi.org/10.4230/LIPIcs.MFCS.2022.17}
          {doi:10.4230/LIPIcs.MFCS.2022.17}}
}

@article{BollobasModulo,
  author = {Bollob{\'a}s, B.},
  title = {Cycles modulo $k$},
  journal = {Bull. London Math. Soc.},
  volume = {9},
  number = {1},
  year = {1977},
  pages = {97--98},
  note = {\href{https://doi.org/10.1112/blms/9.1.97}
          {doi:10.1112/blms/9.1.97}}
}

@article{CampbellEtAl,
  author = {Campbell, R. and Gollin, J. P. and Hendrey, K.
            and Steiner, R.},
  title = {Optimal bounds for zero-sum cycles. {I}. {Odd} order},
  journal = {J. Combin. Theory Ser. B},
  volume = {173},
  year = {2025},
  pages = {246--256},
  note = {\href{https://doi.org/10.1016/j.jctb.2025.04.003}
          {doi:10.1016/j.jctb.2025.04.003}}
}

@article{Caro,
  author = {Caro, Y.},
  title = {Zero-sum problems---a survey},
  journal = {Discrete Math.},
  volume = {152},
  number = {1--3},
  year = {1996},
  pages = {93--113},
  note = {\href{https://doi.org/10.1016/0012-365X(94)00308-6}
          {doi:10.1016/0012-365X(94)00308-6}}
}

@article{Chu,
  author = {Chu, W.},
  title = {Determinant, permanent, and {MacMahon}'s master theorem},
  journal = {Linear Algebra Appl.},
  volume = {255},
  year = {1997},
  pages = {171--183},
  note = {\href{https://doi.org/10.1016/S0024-3795(95)00774-1}
          {doi:10.1016/S0024-3795(95)00774-1}}
}

@article{ChristophEtAl,
  author = {Christoph, M. and Knierim, C.
            and Martinsson, A. and Steiner, R.},
  title = {Improved bounds for zero-sum cycles in $\mathbb Z_p^d$},
  journal = {J. Combin. Theory Ser. B},
  volume = {173},
  year = {2025},
  pages = {365--373},
  note = {\href{https://doi.org/10.1016/j.jctb.2025.03.001}
          {doi:10.1016/j.jctb.2025.03.001}}
}

@article{ChenSaito,
  author = {Chen, G. and Saito, A.},
  title = {Graphs with a cycle of length divisible by three},
  journal = {J. Combin. Theory Ser. B},
  volume = {60},
  number = {2},
  year = {1994},
  pages = {277--292},
  note = {\href{https://doi.org/10.1006/jctb.1994.1019}
          {doi:10.1006/jctb.1994.1019}}
}

@incollection{Dean1988,
  author = {Dean, N.},
  title = {Which graphs are pancyclic modulo $k$?},
  booktitle = {Graph Theory, Combinatorics, and Applications, Vol. 1
               (Kalamazoo, MI, 1988)},
  editor = {Alavi, Y. and Chartrand, G. and Oellermann, O. R. and
            Schwenk, A. J.},
  publisher = {Wiley},
  address = {New York},
  year = {1991},
  pages = {315--326}
}

@article{DeanLesniakSaito,
  author = {Dean, N. and Lesniak, L. and Saito, A.},
  title = {Cycles of length $0$ modulo $4$ in graphs},
  journal = {Discrete Math.},
  volume = {121},
  number = {1--3},
  year = {1993},
  pages = {37--49},
  note = {\href{https://doi.org/10.1016/0012-365X(93)90535-2}
          {doi:10.1016/0012-365X(93)90535-2}}
}

@misc{Diwan,
  author = {Diwan, A. A.},
  title = {Cycles of weight divisible by $k$},
  year = {2024},
  note = {arXiv:2407.01198,
          \href{https://arxiv.org/abs/2407.01198}{arXiv:2407.01198}}
}

@article{Egorychev,
  author = {Egorychev, G. P.},
  title = {The solution of van der {Waerden}'s problem for permanents},
  journal = {Adv. Math.},
  volume = {42},
  number = {3},
  year = {1981},
  pages = {299--305},
  note = {\href{https://doi.org/10.1016/0001-8708(81)90044-X}
          {doi:10.1016/0001-8708(81)90044-X}}
}

@article{ErdosGinzburgZiv,
  author = {Erd{\H{o}}s, P. and Ginzburg, A. and Ziv, A.},
  title = {Theorem in the additive number theory},
  journal = {Bull. Res. Council Israel Sect. F},
  volume = {10F},
  year = {1961},
  pages = {41--43}
}

@inproceedings{Erdos1976,
  author = {Erd{\H{o}}s, P.},
  title = {Some recent problems and results in graph theory,
           combinatorics, and number theory},
  booktitle = {Proceedings of the Seventh Southeastern Conference on
               Combinatorics, Graph Theory and Computing},
  volume = {XVII},
  year = {1976},
  pages = {3--14},
  publisher = {Utilitas Math.}
}

@article{Falikman,
  author = {Falikman, D. I.},
  title = {Proof of the van der {Waerden} conjecture regarding the
           permanent of a doubly stochastic matrix},
  journal = {Math. Notes},
  volume = {29},
  number = {6},
  year = {1981},
  pages = {475--479},
  note = {\href{https://doi.org/10.1007/BF01163285}
          {doi:10.1007/BF01163285}}
}

@article{Friedland,
  author = {Friedland, S.},
  title = {Every $7$-regular digraph contains an even cycle},
  journal = {J. Combin. Theory Ser. B},
  volume = {46},
  number = {2},
  year = {1989},
  pages = {249--252},
  note = {\href{https://doi.org/10.1016/0095-8956(89)90047-6}
          {doi:10.1016/0095-8956(89)90047-6}}
}

@article{GoldsteinGraham,
  author = {Goldstein, A. J. and Graham, R. L.},
  title = {A {Hadamard}-type bound on the coefficients of a determinant
           of polynomials},
  journal = {SIAM Rev.},
  volume = {15},
  number = {3},
  year = {1973},
  pages = {657--658},
  note = {\href{https://doi.org/10.1137/1015079}
          {doi:10.1137/1015079}}
}

@article{GaoHuoLiuMa,
  author = {Gao, J. and Huo, Q. and Liu, C.-H. and Ma, J.},
  title = {A unified proof of conjectures on cycle lengths in graphs},
  journal = {Int. Math. Res. Not. IMRN},
  volume = {2022},
  number = {10},
  year = {2022},
  pages = {7615--7653},
  note = {\href{https://doi.org/10.1093/imrn/rnaa324}
          {doi:10.1093/imrn/rnaa324}}
}

@article{GutinSudakovYeo,
  author = {Gutin, G. and Sudakov, B. and Yeo, A.},
  title = {Note on alternating directed cycles},
  journal = {Discrete Math.},
  volume = {191},
  number = {1--3},
  year = {1998},
  pages = {101--107},
  note = {\href{https://doi.org/10.1016/S0012-365X(98)00097-1}
          {doi:10.1016/S0012-365X(98)00097-1}}
}

@misc{GurvitsBethe,
  author = {Gurvits, L.},
  title = {Unleashing the power of {Schrijver}'s permanental inequality
           with the help of the {Bethe} approximation},
  year = {2011},
  note = {arXiv:1106.2844,
          \href{https://arxiv.org/abs/1106.2844}{arXiv:1106.2844}}
}

@misc{KasselLevy,
  author = {Kassel, A. and L{\'e}vy, T.},
  title = {Trace identities for quiver representations},
  year = {2026},
  note = {arXiv:2603.22156,
          \href{https://arxiv.org/abs/2603.22156}{arXiv:2603.22156}}
}

@article{KellyKuhnOsthus,
  author = {Kelly, L. and K{\"u}hn, D. and Osthus, D.},
  title = {Cycles of given length in oriented graphs},
  journal = {J. Combin. Theory Ser. B},
  volume = {100},
  number = {3},
  year = {2010},
  pages = {251--264},
  note = {\href{https://doi.org/10.1016/j.jctb.2009.08.002}
          {doi:10.1016/j.jctb.2009.08.002}}
}

@article{LetzterMorrison,
  author = {Letzter, S. and Morrison, N.},
  title = {Directed cycles with zero weight in $\mathbb Z_p^k$},
  journal = {J. Combin. Theory Ser. B},
  volume = {168},
  year = {2024},
  pages = {192--207},
  note = {\href{https://doi.org/10.1016/j.jctb.2024.05.002}
          {doi:10.1016/j.jctb.2024.05.002}}
}

@article{Lossers,
  author = {Lossers, O. P.},
  title = {A {Hadamard}-type bound on the coefficients of a determinant
           of polynomials ({A}. {J}. {Goldstein} and {R}. {L}. {Graham})},
  journal = {SIAM Rev.},
  volume = {16},
  number = {3},
  year = {1974},
  pages = {394--395},
  note = {\href{https://doi.org/10.1137/1016065}
          {doi:10.1137/1016065}}
}

@misc{LuoMaZhao,
  author = {Luo, Y. and Ma, J. and Zhao, Z.},
  title = {Dean's conjecture and cycles modulo $k$},
  year = {2026},
  note = {arXiv:2601.13552,
          \href{https://arxiv.org/abs/2601.13552}{arXiv:2601.13552}}
}

@book{MacMahon,
  author = {MacMahon, P. A.},
  title = {Combinatory Analysis},
  volume = {1--2},
  publisher = {Cambridge University Press},
  address = {Cambridge},
  year = {1915--1916}
}

@article{MeszarosSteiner,
  author = {M{\'e}sz{\'a}ros, T. and Steiner, R.},
  title = {Zero-sum cycles in complete digraphs},
  journal = {European J. Combin.},
  volume = {98},
  year = {2021},
  pages = {Art. 103399},
  note = {\href{https://doi.org/10.1016/j.ejc.2021.103399}
          {doi:10.1016/j.ejc.2021.103399}}
}

@article{SudakovVerstraete,
  author = {Sudakov, B. and Verstra{\"e}te, J.},
  title = {The extremal function for cycles of length $\ell$ mod $k$},
  journal = {Electron. J. Combin.},
  volume = {24},
  number = {1},
  year = {2017},
  pages = {Paper No. 1.7},
  note = {\href{https://doi.org/10.37236/6257}
          {doi:10.37236/6257}}
}

@article{Thomassen1985,
  author = {Thomassen, C.},
  title = {Even cycles in directed graphs},
  journal = {European J. Combin.},
  volume = {6},
  number = {1},
  year = {1985},
  pages = {85--89},
  note = {\href{https://doi.org/10.1016/S0195-6698(85)80025-1}
          {doi:10.1016/S0195-6698(85)80025-1}}
}

@article{ThomassenModulo,
  author = {Thomassen, C.},
  title = {Graph decomposition with applications to subdivisions and
           path systems modulo $k$},
  journal = {J. Graph Theory},
  volume = {7},
  number = {2},
  year = {1983},
  pages = {261--271},
  note = {\href{https://doi.org/10.1002/jgt.3190070215}
          {doi:10.1002/jgt.3190070215}}
}

@article{Thomassen1992,
  author = {Thomassen, C.},
  title = {The even cycle problem for directed graphs},
  journal = {J. Amer. Math. Soc.},
  volume = {5},
  number = {2},
  year = {1992},
  pages = {217--229},
  note = {\href{https://doi.org/10.1090/S0894-0347-1992-1135027-1}
          {doi:10.1090/S0894-0347-1992-1135027-1}}
}

@article{VaziraniYannakakis,
  author = {Vazirani, V. V. and Yannakakis, M.},
  title = {Pfaffian orientations, $0$--$1$ permanents, and even cycles
           in directed graphs},
  journal = {Discrete Appl. Math.},
  volume = {25},
  number = {1--2},
  year = {1989},
  pages = {179--190},
  note = {\href{https://doi.org/10.1016/0166-218X(89)90053-X}
          {doi:10.1016/0166-218X(89)90053-X}}
}

\end{document}